\documentclass[a4paper,11pt, twoside]{amsart}

\usepackage{amsmath,amsfonts,amsthm,amsopn,color,amssymb,enumitem,mathrsfs,cite}
\usepackage{palatino}
\usepackage[colorinlistoftodos]{todonotes}
\usepackage[colorlinks=true,urlcolor=blue, citecolor=red,linkcolor=blue,linktocpage,pdfpagelabels, bookmarksnumbered,bookmarksopen]{hyperref}
\usepackage[hyperpageref]{backref}

\usepackage{cancel}

\usepackage[left=2.61cm,right=2.61cm,top=2.72cm,bottom=2.72cm]{geometry}

\numberwithin{equation}{section}

\newcommand{\C}{\mathbb{C}}
\newcommand{\R}{\mathbb{R}}

\newcommand{\RD}{{\mathbb{R}^2}}

\newcommand{\de}{\partial}

\renewcommand{\le}{\leslant}
\renewcommand{\ge}{\geslant}
\renewcommand{\a }{\alpha }

\newcommand{\g }{\gamma }

\renewcommand{\l }{\lambda}

\newcommand{\n }{\nabla }
\newcommand{\s }{\sigma }

\newcommand{\X}{\mathcal{X}}

\renewcommand{\H}{H^1(\RD)}
\newcommand{\Hr}{H^1_r(\RD)}

\newcommand{\M}{\mathcal{M}}

\newcommand{\N}{\mathbb{N}}

\renewcommand{\C}{\mathbb{C}}
\renewcommand{\o}{\omega}

\def\bbm[#1]{\mbox{\boldmath $#1$}}
\newcommand{\beq }{\begin{equation}}
\newcommand{\eeq }{\end{equation}}

\renewcommand{\le}{\leqslant}
\renewcommand{\ge}{\geqslant}

\newcommand{\ird}{\int_{\RD}}

\providecommand{\pgfsyspdfmark}[3]{}

%\makeatletter
%\@addtoreset{equation}{section}%
%\renewcommand{\theequation}{\thesection.\@arabic\c@equation}
%\makeatother

\makeatletter
\providecommand\@dotsep{5}
\def\listtodoname{List of Todos}
\def\listoftodos{\@starttoc{tdo}\listtodoname}
\makeatother

\newtheorem{theorem}{Theorem}[section]
\newtheorem{lemma}[theorem]{Lemma}
\newtheorem{remark}[theorem]{Remark}
\newtheorem{proposition}[theorem]{Proposition}

\newcommand{\BA}{\mathbf{A}}
\newcommand{\ef}{\eqref}

\title
[Modified Schr\"odinger-Chern-Simons equations]
{Standing waves of modified Schr\"odinger equations
	coupled with the Chern-Simons gauge theory}

\author[P. d'Avenia]{Pietro d'Avenia}
\author[A. Pomponio]{Alessio Pomponio}
\author[T. Watanabe]{Tatsuya Watanabe}

\address[P. d'Avenia]{\newline\indent
Dipartimento di Meccanica, Matematica e Management
\newline\indent 
Politecnico di Bari
\newline\indent
Via Orabona 4,  70125  Bari, Italy}
\email{pietro.davenia@poliba.it}

\address[A. Pomponio]{\newline\indent
Dipartimento di Meccanica, Matematica e Management
\newline\indent 
Politecnico di Bari
\newline\indent
Via Orabona 4,  70125  Bari, Italy}
\email{alessio.pomponio@poliba.it}
%\email{\href{mailto:alessio.pomponio@poliba.it}{alessio.pomponio@poliba.it}}

\address[T. Watanabe]{\newline\indent 
Department of Mathematics, 
\newline\indent 
Faculty of Science, Kyoto Sangyo University,
\newline\indent
Motoyama, Kamigamo, Kita-ku, Kyoto-City, 603-8555, Japan}
\email{tatsuw@cc.kyoto-su.ac.jp}
%\email{\href{mailto:tatsuw@cc.kyoto-su.ac.jp}{tatsuw@cc.kyoto-su.ac.jp}}

\thanks{}

\subjclass[2010]{35J62, 35J20, 35Q60}
\date{}
\keywords{ground state solution, gauged Schr\"odinger equation, variational method}

\begin{document}

\begin{abstract}
We are interested in standing waves of a 
modified Schr\"odinger equation coupled with the Chern-Simons gauge theory.
By applying a constraint minimization of Nehari-Pohozaev type, 
we prove the existence of radial ground state solutions.
We also investigate the non-existence  for nontrivial solutions.

\end{abstract}

\maketitle

\section{Introduction}

In this paper, we consider the following nonlocal quasilinear elliptic problem
\begin{equation}\label{eq:1.1}
\begin{split}
&-\Delta u +\omega u -\mu u \Delta u^2
+q \frac{h_{u}^{2}(|x|)}{|x|^{2}} (1 +\mu u^2)u \\
&\qquad + q \Big( \int_{|x|}^\infty \frac{h_u(s)}{s} \big(2+ \mu u^2(s) \big) u^2(s) \,ds 
\Big)u =
\lambda |u|^{p-1}u \quad \hbox{in} \quad \R^2, 
\end{split}
\end{equation}
where $u:\R^2 \to \R$ is radially symmetric, 
$\omega$, $\mu$, $q$, $\lambda$ are positive constants, 
$p>1$ and 
\[
h_u(s)= \int_0^s ru^2(r) \,dr, \quad s \ge 0.\]
Equation \ef{eq:1.1} appears in the study of standing waves for
a modified Schr\"odinger equation coupled with the Chern-Simons gauge theory.
For reader's convenience, we will give the derivation of \ef{eq:1.1} in Section \ref{se:der}.

Our aim of this paper is to study existence and non-existence of positive radial solutions
and radial ground state solutions of \ef{eq:1.1}.

When $q=0$, \ef{eq:1.1} is reduced to the following quasilinear elliptic problem
\begin{equation} \label{eq:1.2}
-\Delta u +\omega u -\mu u \Delta u^2
=\lambda |u|^{p-1}u,
\end{equation}
which is obtained by the {\it modified Schr\"odinger equation}
\begin{equation} \label{eq:1.3}
{\rm i} \psi_{t}+\Delta \psi+\mu \psi \Delta |\psi|^2 +\lambda |\psi|^{p-1}\psi=0,
\end{equation}
looking for standing waves of the form $\psi(t,x)=\exp(i \omega t)u(x)$.
In the last decades, a considerable attention has been devoted to the study of solutions 
to the quasi-linear Schr\"odinger equation \eqref{eq:1.3} that  arises in various fields of Physics (see \cite{BEPZ, BHZ, BL, KBRW, K}). This model is known to be more accurate in many physical phenomena compared with
the classical semi-linear Schr\"odinger equation ${\rm i} \psi_{t}+\Delta \psi+|\psi|^{p-1}\psi=0$.
In \eqref{eq:1.3} $\psi:\R \times \R^2 \to \C$ and 
$\l$ is a constant representing the strength of self-interaction potential.
Moreover, the {\em additional term} $\mu\psi \Delta |\psi|^2 $ appears in various physical models
and arises due to:
\begin{itemize}
	\item the nonlocality of the nonlinear interaction for electron (see \cite{BEPZ}),
	\item the weak nonlocal limit for nonlocal nonlinear Kerr media \cite{KBRW},
	\item the surface term for superfluid film (see \cite{K}),
\end{itemize}
and the parameter $\mu$ represents the strength of each effect and may not be small.

The existence and properties of ground states of \ef{eq:1.2}
as well as stability of standing wave solutions 
have been also studied widely,
see e.g. \cite{AW, CJ, CJS, LWW, RS, SW} and references therein. 

On the other hand if $\mu=0$ in \ef{eq:1.1}, 
one obtains the following non-local elliptic problem
\begin{equation} \label{eq:1.4}
-\Delta u +\omega u 
+q \frac{h_{u}^{2}(|x|)}{|x|^{2}} u 
+ 2q \Big( \int_{|x|}^\infty \frac{h_u(s)}{s} u^2(s) \,ds 
\Big)u =
\lambda |u|^{p-1}u. 
\end{equation}
Equation \ef{eq:1.4} appears in the study of nonlinear Schr\"odinger equations
coupled with the Chern-Simons gauge fields.
Recently, a lot of works concerning with \ef{eq:1.4} has been done, 
see \cite{byeon, BHS2, cunha, huh2, huh3, JPR, P,  AD, AD2, WT, Y}.
Here we briefly introduce some known results on \ef{eq:1.4}. 
In \cite{byeon}, the existence of a positive radial solution of \ef{eq:1.4}
was shown in the case $p>3$ by using a suitable constraint minimization argument.
The authors in \cite{byeon} also investigated the case $1<p \le 3$.
They obtained existence and non-existence results depending on $\lambda$ for the case $p=3$,
and the existence of positive radial solutions as minimizers under $L^2$-constraint
in the case $1<p<3$  ($\o$ appears as a Lagrangian multiplier).
When $p>3$, the existence of a positive solution in the non-radial setting 
has been also obtained in \cite{WT}.
In \cite{AD}, a detailed study for the case $1<p<3$ has been performed.
The authors in \cite{AD} investigated the geometry of the functional 
associated with \ef{eq:1.4} and obtained an explicit threshold value for $\omega$.
They also showed the multiple existence of positive radial solutions 
for $\omega$ in some range.
We mention that in \cite{AD2} the case of a bounded domain for $1<p<3$ is considered
and some results on boundary concentration of solutions has been proved.
In \cite{cunha} the authors studied \ef{eq:1.4} with general nonlinearities of 
the Berestycki-Lions type, 
and obtained a multiplicity result when $q$ is sufficiently small.
We also recall that the multiple existence of normalized solutions of \ef{eq:1.4}
has been studied in \cite{Y}.
Finally, we refer to \cite{berge, liu, tataru} for results on 
Cauchy problem associated with \ef{eq:1.4}.
To summarize, the existence and the non-existence of solutions of \ef{eq:1.4}
heavily depends on $\omega$, $q$, $\lambda$ and $p$,
and the solution set of \ef{eq:1.4} has a rich structure 
depending on the parameters and the exponent $p$.

The purpose of this paper is to investigate the structure 
of the solutions set for \ef{eq:1.1}, which seems to be more complicated
due to the presence of the quasilinear term. 

\medskip
To state our main result, let us define the metric space
\[
\X:=\{u\in H_r^1(\RD) : u^2\in \H\},
\]
endowed with the distance
\[
d_{\X}(u,v):=\|u-v\|_{H^1}+\|\n (u^2)-\n (v^2) \|_{2}.
\]
We recall that 
\[
\Hr:=\{u\in H^1(\RD) : u \hbox{ is radially symmetric}\}.
\] 
Then $u \in \X$ is called a {\em weak solution} of \ef{eq:1.1} if $u$ satisfies
\begin{equation}\label{eq:1.5}
\begin{split}
&\int_{\R^2} \Bigg\{ (1+2 \mu u^2) \nabla u \cdot \nabla \varphi 
+2 \mu u | \nabla u|^2 \varphi
+ \omega u \varphi- \lambda |u|^{p-1}u \varphi 
+q \frac{h_u^2(|x|)}{|x|^2} (1+ \mu u^2) u\varphi \\
&\qquad 
+q \left( \int_{|x|}^{\infty} \frac{h_u(s)}{s} \big( 2+ \mu u^2(s) \big) u^2(s)\,ds
\right) u \varphi \Bigg\} \,dx =0 ,\quad 
\hbox{for all } \varphi \in C_{0,r}^{\infty}(\R^2),
\end{split}
\end{equation}
where $C_{0,r}^{\infty}(\R^2):=\{u\in C_0^{\infty}(\R^2) : u \hbox{ is radially symmetric}\}$.

At least formally, 
weak solutions of \ef{eq:1.1} can be obtained 
as critical points of the following functional defined on $\X$
\begin{equation} \label{funct}
\begin{split}
I(u)&=  \frac 12 \ird \left[ (1+2\mu u^2)|\n u|^2+\omega u^2 \right] \,dx
+ \frac{q}{2} \ird \frac{u^2(x)}{|x|^2}\left( \int_{0}^{|x|}su^2(s) \,ds\right)^2 \,dx 
\\
&\qquad 
+\frac{q\mu}{4}\ird \frac{u^4(x)}{|x|^2}\left( \int_{0}^{|x|}su^2(s) \,ds \right)^2 \,dx
-\frac \lambda {p+1}\ird |u|^{p+1} \,dx,
\end{split}
\end{equation}
but $\X$ is not a vector space, because it is not close with respect to the sum. So we cannot speak about {\em critical points} of $I$ in the usual way, since the functional is not differentiable. However, as we will see in Section \ref{se:vs}, one can show that $I$ is well-defined and continuous on $\X$.
Moreover, since, for every given $u \in \X$ and $\varphi \in  C_{0,r}^{\infty}(\R^2)$, we have $u+\varphi\in \X$, we can evaluate  
the Gateaux derivative
\[
\begin{split}
I'(u)[\varphi] 
&=\int_{\R^2} \Bigg\{ (1+2 \mu u^2) \nabla u \cdot \nabla \varphi
+2 \mu u | \nabla u|^2 \varphi
+ \omega u \varphi- \lambda |u|^{p-1}u \varphi  \\
&\quad +q \frac{h_u^2(|x|)}{|x|^2} (1+ \mu u^2) u\varphi
+q \left( \int_{|x|}^{\infty} \frac{h_u(s)}{s} \big( 2+ \mu u^2(s) \big) u^2(s)\,ds
\right) u \varphi \Bigg\} \,dx.
\end{split}
\]
Then $u \in \X$ is a weak solution of \ef{eq:1.1} if and only if 
the Gateaux derivative of $I$ in every direction $\varphi \in  C_{0,r}^{\infty}(\R^2)$ is zero (see Lemma \ref{lem:3.3} below).

Our main results are the followings.

\medskip
\begin{theorem}\label{main}
Assume that $p>5$. 
Then for any $\omega$, $\mu$, $q$ and $\lambda>0$, 
\eqref{eq:1.1} has a positive radial solution $u \in \X \cap C^2(\R^2)$.
Moreover $u$ is a radial ground state of \ef{eq:1.1}, that is, 
$u$ has least energy among any nontrivial radial weak solutions of \ef{eq:1.1}.
\end{theorem}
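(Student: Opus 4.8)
The plan is to realize the ground state as a minimizer of $I$ on a Nehari--Pohozaev type constraint. First I would derive the Pohozaev identity associated with \eqref{eq:1.1}: multiply the equation by $x \cdot \nabla u$ and integrate over $\R^2$, carefully handling the nonlocal Chern--Simons terms (for these one uses the radial structure and Fubini to rewrite the double integrals, as is standard for \eqref{eq:1.4}). This yields a second functional identity $P(u)=0$ satisfied by every weak solution, complementing the Nehari identity $I'(u)[u]=0$. Because the homogeneities of the various terms in $I$ differ (the quasilinear term scales like $\|\nabla(u^2)\|_2^2$, the Chern--Simons terms are quartic/sextic in a nonlocal way, and the nonlinearity has exponent $p+1$), a single scaling will not suffice; instead I would combine a dilation $u \mapsto u(\cdot/t)$ with an $L^2$-type rescaling $u \mapsto s\, u$, i.e. work with the two-parameter family $u_{t,s}(x) := s\, u(x/t)$, and define
\[
\Ne := \{ u \in \X \setminus \{0\} : G(u) = 0\},
\]
where $G(u)$ is the specific linear combination of $I'(u)[u]$ and $P(u)$ that appears as $\frac{\partial}{\partial t}\big|_{t=1} I(u_{t,1})$ (or the analogous mixed derivative), chosen so that $G$ controls the ``good'' positive terms. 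The condition $p>5$ is precisely what makes the nonlinear term dominate: it guarantees that along the fiber $t \mapsto I(u_{t,s})$ the map has a unique positive maximum and that $G(u)>0$ for $u$ near $0$, so $\Ne$ is a natural constraint and $I$ is bounded below and coercive on it.

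Next I would show $m := \inf_{\Ne} I > 0$ and that $m$ is attained. Coercivity on $\Ne$: using $G(u)=0$ to substitute for $\int |u|^{p+1}$, the functional $I|_{\Ne}$ becomes a sum of manifestly nonnegative terms plus a remainder controlled by them, giving $I(u) \ge c\,\|u\|_{H^1}^2 + c\,\|\nabla(u^2)\|_2^2$ on $\Ne$ (here $p>5$ is used again). Take a minimizing sequence $(u_n) \subset \Ne$; by coercivity it is bounded in $\X$, so up to a subsequence $u_n \weakto u$ in $\Hr$, $u_n^2 \weakto u^2$ in $\H$, and $u_n \to u$ strongly in $L^q(\RD)$ for $q \in (2,\infty)$ by the compact radial Sobolev embedding. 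The nonlocal Chern--Simons terms are weakly lower semicontinuous (they are, up to the nonlocal kernel, positive and convex in the relevant quantities), and the quasilinear term $\int \mu u^2 |\nabla u|^2 = \frac14 \int |\nabla(u^2)|^2$ is w.l.s.c. as a norm. One then checks $u \neq 0$: if $u=0$ then $\int |u_n|^{p+1} \to 0$, which together with $G(u_n)=0$ forces $\|u_n\|_\X \to 0$, contradicting $m>0$ (shown first by a scaling lower bound). Hence $u \neq 0$; a fiber-scaling argument produces $\bar t$ with $u_{\bar t, \bar s} \in \Ne$ and, by w.l.s.c. together with $G(u) \le 0$ (the limit inequality), one gets $\bar t \le 1$, whence $m \le I(u_{\bar t,\bar s}) \le I(u) \le \liminf I(u_n) = m$, so $u \in \Ne$ realizes the infimum.

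Then I would promote the constrained minimizer to a weak solution via a Lagrange multiplier argument adapted to the non-smooth setting of $\X$: since $I$ and $G$ only have Gateaux derivatives along directions $\varphi \in C^\infty_{0,r}(\RD)$, I would show $G'(u)[u] \neq 0$ (this uses $p>5$ and $u \neq 0$, ruling out degeneracy), deduce $I'(u)[\varphi] = \theta\, G'(u)[\varphi]$ for all such $\varphi$, and finally test against $\varphi = u$ (legitimate by density/approximation since $u \in \X$) together with the Pohozaev identity satisfied by $u$ to conclude $\theta = 0$; hence $I'(u)[\varphi]=0$ for all $\varphi$, i.e. $u$ is a weak solution of \eqref{eq:1.1}. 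Positivity follows by replacing $u$ with $|u|$ (which lies in $\Ne$ with the same energy) and then the strong maximum principle after bootstrapping; the bootstrap to $u \in C^2(\RD)$ uses elliptic regularity for the quasilinear operator $-\dv((1+2\mu u^2)\nabla u)$ once $u \in L^\infty$ is known via Moser iteration. Ground state: any nontrivial radial weak solution lies in $\Ne$ (it satisfies both the Nehari and Pohozaev identities, hence $G(\cdot)=0$), so its energy is $\ge m = I(u)$. I expect the main obstacle to be the rigorous Lagrange multiplier step and the verification $G'(u)[u]\neq 0$ in the metric-space setting, since $\X$ is not a Banach space and the admissible test directions are restricted; the weak lower semicontinuity of the sextic nonlocal term $\int \frac{u^4}{|x|^2}(\int_0^{|x|} s u^2\,ds)^2$ under only $u_n \weakto u$ in $\Hr$ and $u_n^2 \weakto u^2$ in $\H$ will also require care.
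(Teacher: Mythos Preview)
Your overall strategy---minimize $I$ on a Nehari--Pohozaev type manifold, use radial compactness to pass to the limit, then argue the minimizer is a free critical point---matches the paper. Two points, however, deserve correction.

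\medskip
\textbf{The scaling.} Contrary to your claim, a \emph{single} one-parameter scaling does suffice, but not pure dilation: if $u_t(x)=u(x/t)$ in $\R^2$, then both $\int|\nabla u|^2$ and $\int u^2|\nabla u|^2$ are $t$-invariant while the Chern--Simons terms $D,E$ scale like $t^4$, so $I(u(\cdot/t))\to+\infty$ as $t\to\infty$ and the fiber has no maximum. The paper uses $u_t(x)=t^{\alpha}u(tx)$ with $\alpha>1$ (and $\alpha<2/(7-p)$ when $5<p<7$); this is a specific curve in your $(t,s)$-plane. With this choice $\Gamma(u):=\tfrac{d}{dt}\big|_{t=1}I(u_t)=\alpha N(u)-P(u)$, and the range of $\alpha$ together with $p>5$ is precisely what makes $t\mapsto I(u_t)$ have a \emph{unique} maximum and all coefficients in $I|_{\mathcal M}$ strictly positive.

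\medskip
\textbf{From constrained minimizer to weak solution.} Here your argument has a genuine gap. You propose to obtain $I'(u)=\theta\,G'(u)$ and then ``test against $\varphi=u$ together with the Pohozaev identity satisfied by $u$'' to force $\theta=0$. But $P(u)=0$ is obtained by multiplying the \emph{original} equation by $x\cdot\nabla u$; at this stage $u$ only satisfies the \emph{modified} equation $I'(u)=\theta\,G'(u)$, so invoking $P(u)=0$ is circular. One can instead derive a Pohozaev identity for the modified equation (which first requires redoing the $C^2$ regularity for that equation) and combine it with $N(u)=\theta\,\Gamma'(u)[u]$ and $\Gamma(u)=0$ to solve for $\theta$; this is the ``complicated algebraic equations'' route of \cite{byeon,WT}, and note that the nondegeneracy $\Gamma'(u)[u]\neq 0$ is not automatic when $5<p<7$ (the coefficient of $E$ in $\Gamma'(u)[u]$ changes sign). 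The paper avoids all of this via a deformation argument (Proposition~\ref{prop:1}): if the minimizer $u\in\mathcal M$ were not a weak solution, choose $\varphi\in C_{0,r}^\infty$ with $I'(u)[\varphi]<-1$, perturb the fiber to $\eta(t)=u_t+\varepsilon\xi(t)\varphi$; this path still crosses $\mathcal M$ (since $\Gamma(\eta(1\pm\varepsilon))$ have opposite signs) but now $\sup_t I(\eta(t))<\sigma$, a contradiction. This uses only Gateaux differentiability along $C_{0,r}^\infty$ directions, exactly the amount available in~$\X$.

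\medskip
A minor point: your concern about weak lower semicontinuity of the sextic nonlocal term is unnecessary. By the compact embedding $H^1_r\hookrightarrow L^q(\R^2)$ for every $q>2$, one actually has $D(u_n)\to D(u)$ and $E(u_n)\to E(u)$ along $H^1_r$-weakly convergent sequences---full convergence, not just lower semicontinuity (Lemmas~\ref{le:compD}, \ref{le:compE}).
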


\begin{theorem} \label{thm2}
Assume that $1<p<5$. Then, for any $\mu$, $q$ and $\lambda>0$, 
there exists $\bar{\omega}>0$ such that for $\omega \ge \bar{\omega}$, 
\ef{eq:1.1} has no nontrivial solution.
\end{theorem}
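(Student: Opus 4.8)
The plan is to combine two integral identities satisfied by every nontrivial weak solution $u\in\X$ of \eqref{eq:1.1} --- a Nehari identity and a Pohozaev identity --- with Gagliardo--Nirenberg inequalities, in order to show that a solution can exist only if $\omega$ is bounded above by a constant depending on $\mu,q,\lambda,p$.

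First I would derive the Nehari identity by testing \eqref{eq:1.5} with $u$ (admissible, up to truncation, since $u\in\X$ implies $u+tu\in\X$). Using that, by Fubini's theorem,
\[
\int_{\R^2}\Big(\int_{|x|}^\infty\frac{h_u(s)}{s}\big(2+\mu u^2(s)\big)u^2(s)\,ds\Big)u^2(x)\,dx=2\,\mathcal C_1+\mu\,\mathcal C_2,
\]
where $\mathcal C_1:=\int_{\R^2}\frac{h_u^2(|x|)}{|x|^2}u^2\,dx$ and $\mathcal C_2:=\int_{\R^2}\frac{h_u^2(|x|)}{|x|^2}u^4\,dx$, one obtains
\[
\int_{\R^2}(1+4\mu u^2)|\nabla u|^2+\omega\int_{\R^2}u^2+3q\,\mathcal C_1+2q\mu\,\mathcal C_2=\lambda\int_{\R^2}|u|^{p+1}.
\]
Second, I would establish a Pohozaev identity from the dilations $u_t(x):=u(x/t)$: in $\R^2$ the terms $\int|\nabla u|^2$ and $\int u^2|\nabla u|^2$ are scale invariant, $\int u^2$ and $\int|u|^{p+1}$ scale like $t^2$, and $\mathcal C_1,\mathcal C_2$ scale like $t^4$ (since $h_{u_t}(r)=t^2h_u(r/t)$), so $t\mapsto I(u_t)$ is an explicit quartic in $t$ and $\frac{d}{dt}I(u_t)\big|_{t=1}=0$ yields
\[
\omega\int_{\R^2}u^2+2q\,\mathcal C_1+q\mu\,\mathcal C_2=\frac{2\lambda}{p+1}\int_{\R^2}|u|^{p+1}.
\]

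Subtracting the Pohozaev identity from the Nehari identity gives an identity with all terms of a definite sign,
\[
\int_{\R^2}(1+4\mu u^2)|\nabla u|^2+q\,\mathcal C_1+q\mu\,\mathcal C_2=\frac{(p-1)\lambda}{p+1}\int_{\R^2}|u|^{p+1},
\]
and a second linear combination of the two identities controls $\int(1+4\mu u^2)|\nabla u|^2$ --- and hence, via the estimates below, $\mathcal C_1$ and $\mathcal C_2$ --- in terms of $\omega\int u^2$. The decisive point is that, writing $v:=u^2\in H^1(\R^2)$, one has $\int|u|^{p+1}=\|v\|_{(p+1)/2}^{(p+1)/2}$ with $(p+1)/2<3\Leftrightarrow p<5$, so Gagliardo--Nirenberg in $\R^2$ bounds $\int|u|^{p+1}$ by a power strictly less than $1$ of $\int|\nabla v|^2=4\int u^2|\nabla u|^2$, times powers of $\int u^2$ and $\int u^4$. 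Combining this with the elementary bounds
\[
\mathcal C_1\le\tfrac1{4\pi}\Big(\int_{\R^2}u^2\Big)\Big(\int_{\R^2}u^4\Big),\qquad \mathcal C_2\le\tfrac1{4\pi}\Big(\int_{\R^2}u^4\Big)^2,
\]
which follow from $h_u(r)^2\le\frac{r^2}{4\pi}\int_{\R^2}u^4$ (Cauchy--Schwarz), and feeding everything back into the two identities, one traps every nonlinear quantity and is forced to the inequality $\omega\le C(\mu,q,\lambda,p)$. Hence for $\omega\ge\bar\omega$ no nontrivial solution exists.

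The step I expect to be delicate is this last "trapping": it is a bookkeeping of several Gagliardo--Nirenberg exponents together with the Chern--Simons estimates, and it is precisely at $p=5$ that the relevant exponent on $\int u^2|\nabla u|^2$ reaches $1$ and the absorption fails --- so one will probably have to split into the ranges $1<p\le 3$ and $3<p<5$, in which Gagliardo--Nirenberg for $v=u^2$ takes different forms. A secondary, more technical obstacle is making the Pohozaev identity rigorous for a merely weak solution $u\in\X$: since \eqref{eq:1.5} only tests against $C^\infty_{0,r}(\R^2)$, one must either first upgrade $u$ to a classical solution by elliptic regularity or justify the dilation computation by a cutoff/approximation argument.
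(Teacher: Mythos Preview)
Your Nehari and Pohozaev identities are correct, and so is the Gagliardo--Nirenberg observation that $\int|u|^{p+1}\le C\big(\int u^2|\nabla u|^2\big)^{(p-1)/4}\int u^2$ with exponent $(p-1)/4<1$ precisely when $p<5$. The gap is in the ``trapping'' step, and it is a genuine one: your Cauchy--Schwarz bounds on $\mathcal C_1,\mathcal C_2$ point in the wrong direction. In both identities the Chern--Simons terms $q\mathcal C_1$ and $q\mu\mathcal C_2$ sit on the \emph{same side} as $\omega\int u^2$, so replacing them by something larger weakens the constraint rather than strengthens it. Concretely, dropping $\mathcal C_1,\mathcal C_2$ and running your scheme gives, from Nehari$-$Pohozaev, $4\mu X\le C\,X^{(p-1)/4}\,Y$ (hence $X\le C_\mu Y^{4/(5-p)}$), and from Pohozaev, $\omega\le C\,X^{(p-1)/4}\le C' Y^{(p-1)/(5-p)}$, with $X=\int u^2|\nabla u|^2$ and $Y=\int u^2$. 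You never obtain an upper bound on $Y$, so no bound on $\omega$ follows. This is not repairable by a different linear combination: when $q=0$ the modified Schr\"odinger equation \eqref{eq:1.2} has solutions for \emph{every} $\omega>0$, so any argument that merely discards or upper-bounds the Chern--Simons contribution must fail.

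The paper's proof uses the Chern--Simons terms in the opposite way: it proves \emph{lower} bounds, namely Propositions~\ref{prop:5.1} and \ref{pr:5.2}, i.e.\ $\int u^4\le 2\|\nabla u\|_2\,\mathcal C_1^{1/2}$ and $\int u^6\le 4\big(\int u^2|\nabla u|^2\big)^{1/2}\mathcal C_2^{1/2}$. After Young's inequality these give (with suitable constants depending on $q$) $3q\,\mathcal C_1\ge c_1\int u^4-\int|\nabla u|^2$ and $2q\mu\,\mathcal C_2\ge c_2\mu\int u^6-4\mu\int u^2|\nabla u|^2$. Substituting into the Nehari identity alone (the Pohozaev identity is in fact not used) yields a \emph{pointwise} inequality
\[
\int_{\R^2}\Big(\omega u^2+c_1 u^4+c_2\mu\, u^6-\lambda|u|^{p+1}\Big)\,dx\le 0,
\]
and for $1<p<5$ the polynomial $t\mapsto\omega t^2+c_1 t^4+c_2\mu\, t^6-\lambda|t|^{p+1}$ is strictly positive for all $t\ne0$ once $\omega$ is large enough. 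This forces $u\equiv0$. So the missing idea in your plan is precisely this pair of ``reverse'' Chern--Simons inequalities; they are what converts the nonlocal terms into pointwise coercivity.
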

We will also study the dependence of $\bar{\omega}$ with respect to $\mu$ 
and $q$ in Remark \ref{rem:5.3} below.

\medskip
To prove Theorem \ref{main}, we use a constraint minimization argument
which is a combination of the Nehari manifold and the Pohozaev manifold,
as performed in \cite{byeon, WT}.
However we must pay attention to apply this approach in our case,
since the functional $I$ associated with \ef{eq:1.1} is only Gateaux differentiable and only in some directions.
We will overcome this difficulty by establishing the regularity 
of weak solutions of \ef{eq:1.1}.
Once we could show that any weak solution of \ef{eq:1.1} satisfies the Nehari
identity and the Pohozaev identity, 
we next aim to prove that the constraint minimizer is actually 
a ground state solution. 
For this purpose, we apply an argument performed in \cite{LWW, RS},
which enables us to avoid considering complicated algebraic equations
as in \cite{byeon, WT}.

The proof of Theorem \ref{thm2} can be done similarly as in \cite{byeon, AD}.
To this end, we will obtain a new inequality of Sobolev type for $u \in \X$.
As shown in \cite{AD} for the case $\mu=0$, 
the existence and the non-existence of positive solutions of \ef{eq:1.1}
in the case $1<p<5$ heavily depends on $\omega$, $\mu$, $q$ and $\lambda$.
We expect to obtain the (multiple) existence of positive solutions 
when $\omega$ is small. 
But we postpone this question to a future work.

\medskip
This paper is organized as follows. 
In Section \ref{se:der}, we introduce the derivation of \ef{eq:1.1}
and the role of physical constants $\omega$, $\mu$, $q$ and $\lambda$.
We formulate \ef{eq:1.1} as a variational problem in Section \ref{se:vs}.
The regularity property of weak solutions of \ef{eq:1.1},
which enables us to apply the Pohozaev identity and plays a central role
for the existence of ground state solutions, 
 is also established here.
In Section \ref{se:proof1}, we will obtain the existence result (Theorem \ref{main})
by applying the constraint minimization technique described before.
Finally, by establishing a new inequality of Sobolev type for $u \in \X$,
we prove Theorem \ref{thm2} in Section \ref{se:proof2}.

\section{Derivation of the model}\label{se:der}
In this section, we introduce the derivation of Equation \ef{eq:1.1}
together with physical backgrounds.
Let us consider the Lagrangian density $\mathcal{L}_{\rm MNLS}$
for a modified nonlinear Schr\"odinger equation, which is given by
\begin{equation}
\label{LagMNLS}
\mathcal{L}_{\rm MNLS}
=\frac{1}{2} \Im(\psi \bar{\psi_t})
-\frac{1}{2}| \nabla \psi|^2 + \frac{\l}{p+1} |\psi|^{p+1}
-\frac{\mu}{4} \big| \nabla |\psi|^2 \big|^2.
\end{equation}

We are interested in the situation where the Schr\"odinger wave function $\psi$ is, for instance, a charged particle and
interacts with the gauge potential $(\phi, \BA)$ 
for the electro-magnetic field in the Chern-Simons theory.
Here $\phi:\R \times \R^2 \to \R$
and $\BA=\big( A^1,A^2 \big): \R \times \R^2 \to \R^2$ 
are the electric potential and the magnetic potential respectively.
Then 
%due to the Gauge invariance of the combined theory, 
the interaction between $\psi$ and $(\phi,\BA)$ is 
obtained by replacing the usual derivatives with the covariant ones, namely
\begin{equation} \label{cov}
\partial_t \longmapsto  \partial_t +ie\phi,
\qquad
\nabla \longmapsto \nabla - ie{\bf A},
\end{equation}
where $e$ denotes the strength of the interaction with the electro-magnetic field
(see \cite{Fel} for details).
Substituting \ef{cov} in \ef{LagMNLS}, one has the following Lagrangian
\[
\tilde{\mathcal{L}}_{\rm MNLS}(\psi,\phi,{\bf A})
=\frac{1}{2} \Im (\psi \bar{\psi_t})-\frac{e}{2} \phi |\psi|^2
-\frac{1}{2} |\nabla\psi- ie{\bf A}\psi|^2
+\frac{\lambda}{p+1}|\psi|^{p+1}
- \frac{\mu}{4} \big| \nabla|\psi|^2- ie{\bf A}|\psi|^2 \big|^2.
\]
We have to consider also the Lagrangian density for the electro-magnetic field, which, in the Chern-Simons theory,
is given by
\[
\mathcal{L}_{\rm MCS}(\phi,\BA)
=-\frac{1}{8} F^{\alpha \beta} F_{\alpha \beta} + 
\frac{\kappa}{8} \varepsilon^{\nu \alpha \beta} A_{\nu} F_{\alpha \beta},
\quad F_{\alpha \beta}= \partial_{\alpha} A_{\beta} - \partial_{\beta} A_{\alpha}, \ 
\alpha,\beta,\nu\in \{ 0,1,2 \},
\]
where the first term in $\mathcal{L}_{\rm MCS}$ is the usual Maxwell term and
the second term is the so-called Chern-Simons term (see \cite{jackiw0, jackiw2} for details).
Here $\varepsilon$ is the Levi-Civita tensor,
$\kappa \in \R$ is a parameter which controls the Chern-Simons term,
the Lorentz metric tensor is ${\rm diag}(1,-1,-1)$, and
the coordinates are 
$x^{\nu}=(t,x_1,x_2)$. Moreover we have $A^0=A_0=\phi$ and $A^j=-A_j$, for $j=1,2$.
At large distances and low energies, the lower derivatives of the Chern-Simons term
dominates the higher derivative Maxwell term, and hence we may replace the Lagrangian 
density by
\[
	\tilde{\mathcal{L}}_{\rm MCS}(\phi,\BA)
	=\frac{\kappa}{8} \varepsilon^{\nu \alpha \beta} A_{\nu} F_{\alpha \beta}
	=\frac{\kappa}{4} \big(
	\phi(\partial_2 A^1 - \partial_1 A^2) - A^1(\partial_2 \phi + \partial_t A^2) 
	+ A^2(\partial_t A^1 + \partial_1 \phi)
	\big). \]
So, the total Lagrangian $\mathcal{L}$ is defined by
\begin{align*}
	\mathcal{L}(\psi,\phi,\BA)
	&=
	\tilde{\mathcal{L}}_{\rm MNLS}(\psi,\phi,\BA) + \tilde{\mathcal{L}}_{\rm MCS}(\phi,\BA) \\
	&=
	\frac{1}{2} \Im (\psi \bar{\psi_t})-\frac{e}{2} \phi |\psi|^2
	-\frac{1}{2} |\nabla\psi- ie{\bf A}\psi|^2
	+\frac{\lambda}{p+1}|\psi|^{p+1}
	- \frac{\mu}{4} \big| \nabla|\psi|^2- ie{\bf A}|\psi|^2 \big|^2 \\
	&\quad +\frac{\kappa}{4} \big(
	\phi(\partial_2 A^1 - \partial_1 A^2) - A^1(\partial_2 \phi + \partial_t A^2) 
	+ A^2(\partial_t A^1 + \partial_1 \phi)
	\big).
	\end{align*}
Then the Euler-Lagrange equations for the total action
\[
\mathcal{S}=\mathcal{S}(\psi,\phi,\BA)=\int_{\R^2} \int_{\R} \mathcal{L}(\psi,\phi,\BA) \,dt \,dx
\]
are given by
\begin{equation*}\label{eulero}
	\begin{cases}
	i \psi_t-e\phi \psi +( \nabla -ie \BA)^2 \psi 
	+ \l |\psi|^{p-1} \psi + \mu \psi \Delta |\psi|^2 -e^2 \mu |\BA|^2 |\psi|^2 \psi=0
	\\
	\kappa (\partial_2 A^1-\partial_1 A^2) =e|\psi|^2
	\\
	\kappa( \partial_2 \phi+\partial_t A^2)
	+e^2\mu|\psi|^4 A^1 
	=2e \Im \Big( \bar{\psi} (\partial_1 \psi-ie A^1 \psi) \Big)
	\\
	-\kappa( \partial_1 \phi+\partial_t A^1)
	+e^2\mu|\psi|^4 A^2 
	=2e \Im \Big( \bar{\psi} (\partial_2 \psi-ie A^2 \psi) \Big).
	\end{cases}
	\end{equation*}

If we consider standing waves  
$\psi(t,x)={\rm exp}\big( iS(t,x) \big) u(t,x)$ with $u$, $S:\R \times \R^2 \to \R$, the total action depends on $(u,S,\phi,\BA)$ and the Euler-Lagrange equations become
\begin{equation*}
	\begin{cases}
	-\Delta u +(S_t +e \phi +| \nabla S-e \BA|^2)u
	-\mu u \Delta u^2 +e^2 \mu |\BA|^2 u^3=\l |u|^{p-1} u
	\\
	\partial_t (u^2) +2 \operatorname{div}\big( (\nabla S-e \BA) u^2 \big)=0
	\\
	\kappa (\partial_2 A^1 - \partial_1 A^2) =eu^2
	\\
	\kappa (\partial_2 \phi + \partial_t A^2) 
	+ e^2 \mu u^4 A^1
	=2e (\partial_1 S - eA^1)  u^2
	\\
	-\kappa (\partial_1 \phi +\partial_t A^1)+ e^2 \mu u^4 A^2
	=2e (\partial_2 S - eA^2) u^2.
	\end{cases}
	\end{equation*}
Now we suppose that $u=u(x)$ and $S=\omega t$.
Moreover we consider the static case: 
$\phi=\phi(x)$ and $A^i=A^i(x)$. 
Then we get
\[
	\begin{cases}
	-\Delta u +\omega u -\mu u \Delta u^2 +e\phi u
	+e^2|{\bf A}|^2 (1 +\mu u^2)u
	= \lambda |u|^{p-1}u, \\
	\operatorname{div}({\bf A}u^2) =0, \\
	\kappa (\partial_2 A^1 - \partial_1 A^2) =eu^2, \\
	-\kappa \partial_2 \phi 
	= e^2 (2+ \mu u^2)u^2 A^1, \\
	\kappa  \partial_1 \phi
	= e^2 (2+ \mu u^2)u^2 A^2.
	\end{cases}
	\]
Finally in the Coulomb gauge $\operatorname{div}\BA=0$, 
it follows that $\operatorname{div}({\bf A}u^2)= {\bf A}\cdot \n u^2$ and hence
\begin{equation}
	\label{lastsyst}
	\begin{cases}
	-\Delta u +\omega u -\mu u \Delta u^2 +e\phi u
	+e^2|{\bf A}|^2 (1 +\mu u^2)u
	= \lambda |u|^{p-1}u, \\
	{\bf A}\cdot \n u^2=0, \\
	\kappa (\partial_2 A^1 - \partial_1 A^2) =eu^2, \\
	-\kappa \partial_2 \phi 
	= e^2(2+\mu  u^2)  u^2 A^1, \\
	\kappa  \partial_1 \phi
	= e^2(2+\mu  u^2) u^2 A^2.
	\end{cases}
	\end{equation}

Observe that the second equation in \ef{lastsyst} implies that, up to the ``trivial cases'', 
the function $u$ is radial  if and only if $\mathbf A$ is a {\em tangential} vector field, i.e. 
$$
\mathbf A= \frac{e}{\kappa} h_{u}(x)\mathbf t,
\quad
\hbox{where } \quad 
\mathbf t=(x_{2}/|x|^{2},-x_{1}/|x|^{2}).$$ 
Moreover, since the problem is invariant by translations, to avoid the related difficulties, 
we look for radial solutions $u$. 
Thus, from this choice, arguing as in \cite[Lemma 3.3]{huh2}, it follows that
$\mathbf A$ has to be invariant for the group action:
$$
{\rm T}_{g}\mathbf A(x)= g^{-1}\cdot \mathbf A (g(x)), 
\quad g\in O(2),$$
and this readily implies that $h_{u}$ has to be a radial function.
So, whenever $u$ is radial, the magnetic potential $\BA$ has to be necessarily written as
\begin{equation*}%\label{A}
	A^1(x)=\frac{e}{\kappa}\frac{x_2}{|x|^{2}}h_{u}(|x|),
	\quad
	A^2(x)=-\frac{e}{\kappa}\frac{x_1}{|x|^{2}}  h_{u}(|x|).
	\end{equation*}
Moreover, by the last two equations in system \eqref{lastsyst}, one finds that
\begin{equation*} %\label{phi}
\n \phi 
= \frac{e^2}{\kappa} \left( A^2 , -A^1 \right) (2 + \mu u^2)u^2 
=
-\frac{e^3}{\kappa^2} \frac{x}{|x|^{2}}h_{u}(|x|) (2 + \mu  u^2)u^2
=
-\frac{e^3}{\kappa^2} h_{u}(|x|) (2 + \mu  u^2)u^2 \mathbf n
\end{equation*}
where $\mathbf n=(x^{1}/|x|^{2}, x^{2}/|x|^{2})$. 
Thus it follows that the electric potential $\phi$ is radial. Assuming that 
$\displaystyle \lim_{|x|\to+\infty}\phi(|x|)=0$, we have
\[
\phi(|x|)=\frac{e^3}{\kappa^2} \int_{|x|}^\infty 
\frac{h_u(s)}{s} \big( 2 + \mu u^2(s) \big) u^2(s) \,ds.
\]
Finally, using the third equation in system \eqref{lastsyst} and assuming $h_{u}(0)=0$, 
which is necessary to have $\mathbf A$ smooth, we have
\[
h_{u}(|x|)=\int_{0}^{|x|} s u^{2}(s) \,ds.
\]
In this way we have solved $\phi$ and $\mathbf A$ in terms of $u$ and so, in order to solve the  \eqref{lastsyst}, we need to study only the first equation of the system 
which, now,  can be written as
\begin{multline*} %\label{eq}
 -\Delta u +\omega u -\mu u \Delta u^2
+\frac{e^4}{\kappa^2}\frac{h_{u}^{2}(|x|)}{|x|^{2}} (1 +\mu u^2)u \\
\quad + \frac{e^4}{\kappa^2} 
\Big( \int_{|x|}^\infty \frac{h_u(s)}{s} \big( 2 + \mu u^2(s) \big) u^2(s) \,ds \Big) u
= \lambda |u|^{p-1}u \quad \hbox{in }  \R^2. 
\end{multline*}
Putting $q:= \dfrac{e^4}{\kappa^2}$, we arrive at \ef{eq:1.1}.

\section{Variational settings and preliminaries}\label{se:vs}

In this section, we formulate \ef{eq:1.1} as a variational problem 
and prepare some preliminary results.
Now we observe that if $u\in \X$ is a solution of \ef{eq:1.1}, then it solves $L(u)=0$ where
\[
L(u)= \operatorname{div}A(u, \nabla u) +B(x,u,\nabla u),
\] 
with
\begin{align} \label{eq:3.1}
A(\s, {\bf p})& = (1+2 \mu \s^2) {\bf p}, \nonumber \\
B(x,\s,{\bf p}) &=
-\big( 2 \mu |{\bf p}|^2 + \omega +qV_1(x)(1+\mu u^2) +q V_2(x) \big) \s
+\lambda |\s|^{p-1}\s,
\end{align}
and 
\[
V_1(x)= \begin{cases}
\displaystyle \frac{h_u^2(|x|)}{|x|^2} & x\neq 0,\\
0 & x=0,
\end{cases} \qquad
V_2(x)= \int_{|x|}^{\infty} \frac{h_u(s)}{s} \big( 2+ \mu u^2(s) \big) u^2(s) \,ds.\]
From \ef{eq:3.1}, we find that \ef{eq:1.1} is a quasilinear elliptic equation
with principal part in divergence form 
and the structure conditions in \cite{LU} are all fulfilled 
(see \cite[Chapter 4]{LU} or \cite{Tol}).

First we establish that any weak solutions of \ef{eq:1.1} are classical ones.
To this end, we begin with the following lemma.

\begin{lemma} \label{lem:3.1}
Let us fix $u \in \X$.  We have:
\begin{enumerate}[label=(\roman*),ref=\roman*]
	\item \label{lem31i}$V_1$, $V_2$ are non-negative and bounded;
	\item \label{lem31ii}if we suppose further that $u \in C(\R^2)$, then $V_1$ and $V_2$ belong to the class $C^1(\R^2)$.
\end{enumerate}
\end{lemma}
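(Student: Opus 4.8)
The plan is to establish (i) and (ii) in turn, in each case isolating the behaviour at the origin: away from $x=0$ the regularity of $V_1$ and $V_2$ is immediate, since $h_u$ is $C^1$ on $(0,\infty)$ (with $h_u'(s)=s\,u^2(s)$, $u^2$ being continuous on $(0,\infty)$ as $u\in H^1_r(\R^2)$) and $x\mapsto|x|$ is smooth on $\R^2\setminus\{0\}$. So everything comes down to the behaviour of $h_u$ (and of $u^2$) as $s\to0^+$.

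\emph{Proof of (i).} Non-negativity is clear from $h_u\ge0$ and $u^2\ge0$. The key estimate is that $h_u$ grows at most linearly near the origin: by the Cauchy--Schwarz inequality, enlarging the last integral,
\[
h_u(s)=\int_0^s r\,u^2(r)\,dr\le\Big(\int_0^s r\,dr\Big)^{1/2}\Big(\int_0^\infty r\,u^4(r)\,dr\Big)^{1/2}=\frac{s}{\sqrt2}\Big(\tfrac{1}{2\pi}\|u\|_{L^4(\R^2)}^4\Big)^{1/2}=:c_1 s,
\]
with $c_1<\infty$ since $H^1(\R^2)\hookrightarrow L^4(\R^2)$. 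Hence $h_u(s)/s\le c_1$ on $(0,\infty)$, so $V_1(x)=h_u(|x|)^2/|x|^2\le c_1^2$; moreover $\int_0^s r\,u^4(r)\,dr\to0$ as $s\to0$ forces $h_u(s)/s\to0$, so $V_1(x)\to0=V_1(0)$ and $V_1$ is continuous on all of $\R^2$. For $V_2$, using $h_u(s)/s\le c_1$ again yields $0\le g(s):=\frac{h_u(s)}{s}\big(2+\mu u^2(s)\big)u^2(s)\le c_1\big(2u^2(s)+\mu u^4(s)\big)$, so it is enough to check that $\int_0^\infty u^2(s)\,ds<\infty$ and $\int_0^\infty u^4(s)\,ds<\infty$. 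On $(1,\infty)$ this is clear because $u^{2k}(s)\le s\,u^{2k}(s)$ there and $\int_0^\infty s\,u^{2k}(s)\,ds<\infty$ for $k=1,2$ (i.e.\ $u\in L^2(\R^2)$, $u^2\in L^2(\R^2)$); on $(0,1)$ one uses Hölder's inequality: for $k=1,2$,
\[
\int_0^1 u^{2k}(s)\,ds\le\Big(\int_0^1 s\,u^{8k}(s)\,ds\Big)^{1/4}\Big(\int_0^1 s^{-1/3}\,ds\Big)^{3/4}<\infty
\]
by the Sobolev embedding. Therefore $g\in L^1((0,\infty))$ and $0\le V_2(x)=\int_{|x|}^\infty g(s)\,ds\le\|g\|_{L^1((0,\infty))}$, so $V_2$ is bounded.

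\emph{Proof of (ii).} Assume in addition $u\in C(\R^2)$. I will use the elementary fact that a radial function $x\mapsto W(|x|)$ on $\R^2$ belongs to $C^1(\R^2)$ as soon as $W\in C^1([0,\infty))$ and $W'(0)=0$; so it suffices to verify this for the radial profiles of $V_1$ and $V_2$. Now $s\mapsto s\,u^2(s)$ is continuous on $[0,\infty)$, hence $h_u\in C^1([0,\infty))$ with $h_u'(s)=s\,u^2(s)$, and the continuity of $u^2$ at $0$ gives the one-sided expansions $h_u(s)=\tfrac12 u^2(0)s^2+o(s^2)$ and $h_u'(s)=u^2(0)s+o(s)$ as $s\to0^+$. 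For $V_1$, whose profile is $k(s)^2$ with $k(s):=h_u(s)/s$ and $k(0):=0$: the expansions give $k\in C^1([0,\infty))$ with $k'(0)=\tfrac12 u^2(0)$, hence $k^2\in C^1([0,\infty))$ with derivative $2k(0)k'(0)=0$ at $s=0$, so $V_1\in C^1(\R^2)$. For $V_2$, whose profile is $W_2(s):=\int_s^\infty g(\sigma)\,d\sigma$: by the first part $g\in L^1((0,\infty))$, while now $g$ is continuous on $(0,\infty)$ and $g(s)\to0$ as $s\to0^+$ (since $h_u(s)/s\to0$ and $2+\mu u^2(s)$, $u^2(s)$ stay bounded near $0$ — this being exactly where $u\in C(\R^2)$ enters); hence $W_2\in C^1([0,\infty))$ with $W_2'(s)=-g(s)$ and $W_2'(0)=0$, so $V_2\in C^1(\R^2)$.

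The genuinely delicate point is the origin: that $h_u(s)/s$ stays bounded and tends to $0$ using only $u\in\X$, and that $(2+\mu u^2)u^2$ stays bounded near $0$, which is what forces the hypothesis $u\in C(\R^2)$ in (ii) — recall that a radial $H^1(\R^2)$ function can be unbounded at the origin. The remaining ingredients (the estimates on $(1,\infty)$, the Hölder bookkeeping on $(0,1)$, and the $C^1$ regularity away from the origin) are routine.
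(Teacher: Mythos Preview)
Your proof is correct and follows essentially the same strategy as the paper: the Cauchy--Schwarz bound $h_u(s)\le C s\|u\|_4^2$ for (i), and for (ii) the analysis of $h_u(s)=\tfrac12 u^2(0)s^2+o(s^2)$ at the origin. The only cosmetic differences are that the paper bounds $V_2(0)$ directly (splitting $\int_0^1+\int_1^\infty$ with a H\"older step on $(0,1)$ involving $\|u\|_6$ and $\|u\|_{12}$, versus your $\|u\|_8$ and $\|u\|_{16}$), and for (ii) the paper computes $\partial V_1/\partial x_i$ explicitly and shows it is $O(|x|)$, whereas you invoke the cleaner radial-profile criterion ($W\in C^1([0,\infty))$, $W'(0)=0$)---but these are the same argument in different clothing.
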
 

\begin{proof}
We argue as in \cite[Proposition 2.1, 2.2]{byeon}.
First by the definition, we see that $V_1$, $V_2$ are non-negative.
Next by the Schwarz inequality, one finds that
\begin{equation} \label{eq:3.2}
h_u(s) = \frac{1}{2 \pi} \int_{B_s(0)} u^2(y) \,dy
\le \frac{1}{2\pi} |B_s(0)|^{\frac{1}{2}} \| u\|_{4}^2
\le Cs \| u \|_{4}^2, \quad \hbox{for} \ s \ge 0.
\end{equation} 
Thus by the definition and from \ef{eq:3.2}, we get
\begin{equation*} %\label{eq:3.3}
V_1(x) \le C \| u\|_{4}^4, \quad \hbox{for all} \ x \in \R^2.
\end{equation*}
Moreover, observing that, for all $ x\in \R^2$, $0\le V_2(x)\le V_2(0)$, we need to estimate only
\begin{equation*}%\label{eq:3.4}
\begin{split}
V_2(0) &= 
\int_{0}^1 \frac{h_u(s)}{s} \big( 2+ \mu u^2(s) \big) u^2(s) \,ds
+\int_1^{\infty} \frac{h_u(s)}{s} \big( 2+ \mu u^2(s) \big) u^2(s) \,ds  \\
&\le C \| u \|_{4}^2 
\left[\int_{0}^1 (2+\mu u^2(s))u^2(s)\,ds
+\int_1^{\infty} (2+ \mu u^2(s))u^2(s) \,ds\right] \\
&\le C \| u\|_{4}^2 \left[\left( \int_{0}^1 s^{-\frac{1}{2}} \,ds \right)^{\frac{2}{3}}
\left( \int_{0}^1 (2+\mu u^2(s))^3 u^6(s) s \,ds \right)^{\frac{1}{3}}
+ \int_1^{\infty} (2+ \mu u^2(s))u^2(s) s\,ds\right] \\
&\le C\| u\|_{4}^2(\| u\|_{6}^2+\| u\|_{{12}}^4
+\| u\|_{2}^2+\| u\|_{4}^4).
\end{split}
\end{equation*}
This completes the proof of (\ref{lem31i}).\\
To prove (\ref{lem31ii}), we observe that $V_1$, $V_2 \in C^1(\R^2 \setminus \{ 0\})$ 
if $u \in C(\R^2)$. Moreover since $u \in C(\R^2)$, it follows that
\[
\frac{h_u(|x|)}{|x|^2} %= \frac{1}{|x|^2} \int_0^{|x|} r u^2(r) \,dr 
= \frac{1}{2 \pi |x|^2} \int_{B_{|x|}(0)} u^2(y) \,dy
\to \frac{1}{2} u^2(0), \quad \hbox{as} \ |x| \to 0.\]
This implies that $h_u(|x|)=O(|x|^2)$ as $|x| \to 0$.
Thus one has
$V_1(x)
%= \frac{h_u^2(|x|)}{|x|^2}
=O(|x|^2)$ and, for $i=1,2$, 
$$
\frac{\partial V_1}{\partial x_i}(0)=
%\lim_{h \to 0} \frac{V_1(h,0)-V_1(0,0)}{h}=
0,$$
and
\[
\frac{\partial V_1}{\partial x_i} (x)
= \frac{2 x_i h_u(|x|)}{|x|^4} \big( |x|^2 u^2(x) -h_u(|x|) \big)
=O(|x|), \quad \hbox{as} \ |x| \to 0,\]
from which we conclude that $V_1 \in C^1(\R^2)$.
In a similar way, it follows that $V_2 \in C^1(\R^2)$.
\end{proof}

Now we are ready to prove the following regularity result.

\begin{proposition} \label{prop:3.2}
Let $u \in \X$ be a weak solution of \ef{eq:1.1}. 
Then $u \in C^2(\R^2)$ and decays exponentially up to second derivatives.
\end{proposition}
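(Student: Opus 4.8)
The plan is to prove the statement in three stages: (i) establish that $u$ is bounded, (ii) bootstrap to $u\in C^{2}$ by standard quasilinear and Schauder theory, and (iii) obtain the exponential decay of $u$ and its derivatives by a barrier argument. Stage (i) is the heart of the matter, because $u\in\X$ only gives $u^{2}\in\H$ (hence $\nabla(u^{2})=2u\nabla u\in L^{2}(\RD)$ and $u\in L^{s}(\RD)$ for all $s<\infty$), and in the equation written as $-\Delta u=(\dots)$ the quasilinear term contributes $2\mu u|\nabla u|^{2}$, which a priori lies only in $L^{1}_{loc}$; plain elliptic regularity therefore does not suffice. To remove this obstruction I would perform the change of variables $v=f(u)$, where $f(0)=0$ and $f'(t)=\sqrt{1+2\mu t^{2}}$, so that $\nabla v=\sqrt{1+2\mu u^{2}}\,\nabla u$. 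A direct computation gives $-\operatorname{div}\big((1+2\mu u^{2})\nabla u\big)+2\mu u|\nabla u|^{2}=-\sqrt{1+2\mu u^{2}}\,\Delta v$, so that \eqref{eq:1.1} becomes the \emph{semilinear} equation
\[
-\Delta v=\frac{1}{\sqrt{1+2\mu u^{2}}}\Big(\lambda|u|^{p-1}u-\omega u-qV_{1}(x)(1+\mu u^{2})u-qV_{2}(x)u\Big),\qquad u=f^{-1}(v).
\]
Using \eqref{eq:1.5} with the (non-smooth but admissible, by the density/differentiability considerations behind Lemma \ref{lem:3.3}) test function $\varphi/\sqrt{1+2\mu u^{2}}$, one checks that $v\in\Hr$ and that $v$ is a weak solution of this equation. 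Since $f(t)\sim c\,t^{2}$ at infinity we have $|f^{-1}(v)|\le C(1+|v|^{1/2})$, and since $V_{1},V_{2}$ are bounded by Lemma \ref{lem:3.1}(\ref{lem31i}), the right-hand side is bounded by $C(1+|v|^{\sigma})$ for some $\sigma>0$; as $v\in\Hr\subset L^{s}(\RD)$ for all $s<\infty$, it lies in $L^{s}_{loc}$ for all $s<\infty$. Calderón–Zygmund estimates and Sobolev embedding then give $v\in W^{2,s}_{loc}\subset C^{1,\alpha}_{loc}(\RD)$, hence $u=f^{-1}(v)\in C(\RD)\cap L^{\infty}_{loc}(\RD)$; together with the decay $u(x)\to0$ as $|x|\to\infty$ coming from the radial lemma for $\Hr$, we conclude $u\in L^{\infty}(\RD)$. (An alternative route to this bound is a Moser iteration exploiting the enhanced ellipticity carried by $1+2\mu u^{2}$, the nonlocal terms being harmless since $V_{1},V_{2}\in L^{\infty}$.)

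Once $u\in L^{\infty}(\RD)$, one has $1\le 1+2\mu u^{2}\le 1+2\mu\|u\|_{\infty}^{2}$, so \eqref{eq:1.1} is uniformly elliptic with bounded coefficients and with a lower-order term $B$ of natural ($|\nabla u|^{2}$) growth; the structure conditions of \cite{LU} are then satisfied for a bounded solution, and by \cite[Chapter 4]{LU} together with \cite{Tol} we get $u\in C^{1,\alpha}_{loc}(\RD)$. Now $u\in C(\RD)$, so $V_{1},V_{2}\in C^{1}(\RD)$ by Lemma \ref{lem:3.1}(\ref{lem31ii}); moreover $1+2\mu u^{2}\in C^{1,\alpha}_{loc}$, $|\nabla u|^{2}\in C^{0,\alpha}_{loc}$, and $t\mapsto|t|^{p-1}t$ is locally Hölder, so rewriting \eqref{eq:1.1} in the form $\Delta u=\tfrac{1}{1+2\mu u^{2}}\big(4\mu u|\nabla u|^{2}+(\omega+qV_{1}(1+\mu u^{2})+qV_{2})u-\lambda|u|^{p-1}u\big)$ with locally Hölder-continuous right-hand side, interior Schauder estimates yield $u\in C^{2,\gamma}_{loc}(\RD)$, in particular $u\in C^{2}(\RD)$.

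For the decay, I would first note that, by rescaling on unit balls $B_{1}(x_{0})$ with $|x_{0}|\to\infty$, the interior gradient estimate for \eqref{eq:1.1} gives $|\nabla u(x)|\le C\sup_{B_{1}(x)}|u|$, so $\nabla u(x)\to0$ as $|x|\to\infty$ as well. Fix $R$ so large that $2\mu|\nabla u|^{2}+\lambda|u|^{p-1}<\omega/2$ on $\{|x|\ge R\}$; there \eqref{eq:1.1} reads $-\operatorname{div}\big((1+2\mu u^{2})\nabla u\big)+c(x)u=0$ with $c(x)=\omega+2\mu|\nabla u|^{2}+qV_{1}(1+\mu u^{2})+qV_{2}-\lambda|u|^{p-1}\ge\omega/2$. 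Since $\nabla(1+2\mu u^{2})=4\mu u\,\nabla u\to0$, the function $\psi(x)=Me^{-\delta|x|}$ is, for $\delta$ small, a supersolution of this operator on $\{|x|>R\}$; choosing $M$ so that $\psi\ge|u|$ on $\{|x|=R\}$ and comparing $\pm u$ with $\psi$ via the maximum principle gives $|u(x)|\le Me^{-\delta|x|}$. Feeding this back into the interior gradient estimate gives the same exponential bound for $\nabla u$, and then the interior $C^{2,\gamma}$ estimate gives it for $D^{2}u$, which completes the proof.

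The step I expect to be the main obstacle is the first one: because $I$ is only Gateaux differentiable and only along directions in $C_{0,r}^{\infty}(\RD)$, justifying that \eqref{eq:1.5} may be applied to the test function $\varphi/\sqrt{1+2\mu u^{2}}$ (equivalently, that $v=f(u)\in\Hr$ is a genuine weak solution of the transformed semilinear equation) requires a careful density/truncation argument relying on $u^{2}\in\H$; everything after the $L^{\infty}$ bound is standard.
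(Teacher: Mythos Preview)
Your proposal is correct, and Stages (ii) and (iii) match the paper almost exactly: after the $L^{\infty}$ bound the paper also invokes \cite{Tol} for $C^{1,\alpha}$, then Lemma \ref{lem:3.1}(\ref{lem31ii}) and Schauder for $C^{2,\alpha}$, and for the decay it simply cites \cite[Theorem 4.1]{RaS} in place of the barrier you sketch.

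The genuine difference is in Stage (i). You linearize via the Colin--Jeanjean transform $v=f(u)$, $f'(t)=\sqrt{1+2\mu t^{2}}$, and then run Calder\'on--Zygmund on the resulting semilinear equation. The paper instead performs a De Giorgi iteration \emph{directly} on the quasilinear weak formulation: it tests \ef{eq:1.5} with $\varphi=\xi^{2}(u-k)_{+}$, then introduces the auxiliary function $v=\sqrt{1+2\mu u^{2}}\,(u-k)_{+}+k$ (so that $(1+2\mu u^{2})(u-k)_{+}^{2}=(v-k)_{+}^{2}$) to absorb the quasilinear coefficient, and shows that this $v$ lies in the De Giorgi class $DG^{+}$. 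Your route is conceptually cleaner once the change of variables is justified at the weak level, but---as you yourself flag---that justification (testing with $\psi/\sqrt{1+2\mu u^{2}}$) is exactly where the work hides; the paper's route sidesteps this by using only truncations of $u$ as test functions, which are admissible by a more straightforward density argument since $u\in\X$. Both approaches exploit the same structural miracle, namely that the term $2\mu u|\nabla u|^{2}$ is precisely what is needed to complete the square with $(1+2\mu u^{2})|\nabla u|^{2}$.
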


\begin{proof}
The proof consists of two steps.\\
{\bf Step 1}: We claim that $u \in L^{\infty}(\R^2)$ and $u(x) \to 0$, as $|x| \to \infty$.\\
For this purpose, we perform the De Giorgi iteration as in 
\cite[Theorem 7.1]{LU}, \cite[Appendix 6]{LWW}.
Let $y \in \R^2$, $R>0$ and $\sigma \in (0,1)$ be arbitrarily given.
Choose a cut-off function $\xi \in C_0^{\infty}(\R^2)$ with
$\xi =1$ on $B_{\sigma R}(y)$, 
$\xi =0$ on $B_R^c(y)$, $0 \le \xi \le 1$ and
$|\nabla \xi | \le \frac{C}{(1-\sigma)R}$.
Finally we set $\varphi= \xi^2(u-k)_+$ with $k \ge 0$.\\
By a density argument,  one can use $\varphi$ as a test function in \ef{eq:1.5}
to obtain
\begin{align*}
&\int_{\R^2} \xi^2(1+2\mu u^2)|\nabla (u-k)_+|^2
+ \xi^2 \big( 2 \mu |\nabla u|^2 +\omega +qV_1(x)(1+\mu u^2) +qV_2(x) \big)
u(u-k)_+ \,dx \\
&= -2 \int_{\R^2} \xi (1+2 \mu u^2) (u-k)_+ \nabla u \cdot \nabla \xi \,dx
+\lambda \int_{\R^2} \xi^2 |u|^{p-1} u(u-k)_+ \,dx.
\end{align*}
Since $V_1$, $V_2 \ge 0$, $(u-k)_+=0$ on $\{ u \le k \}$ 
and $0 \le (u-k)_+ \le u$ on $\{ u>k \}$, one has
\begin{align*}
&\int_{\R^2} \xi^2 \big( 2 \mu |\nabla u|^2 +\omega +qV_1(x)(1+\mu u^2) +qV_2(x) \big)
u(u-k)_+ \,dx \\
%&= \int_{ \{ u>k \} } \xi^2 \big( 2 \mu |\nabla u|^2 +\omega +qV_1(x)(1+\mu u^2) +qV_2(x) \big)
%u(u-k)_+ \,dx \\
&\ge 2 \mu \int_{ \{ u>k \} } \xi^2 u (u-k)_+ |\nabla u|^2 \,dx \\
&\ge 2\mu \int_{ \{ u>k \} } \xi^2 (u-k)_+^2 |\nabla u|^2 \,dx \\
&= 2 \mu \int_{\R^2} \xi^2 (u-k)_+^2 |\nabla u|^2 \,dx.
\end{align*}
On the other hand by $\nabla (u-k)_+=\nabla u$ on the set $\{u>k\}$, 
the H\"older inequality and the Young inequality,
we also have
\begin{align*}
&-2 \int_{\R^2} \xi (1+2 \mu u^2) (u-k)_+ \nabla u \cdot \nabla \xi \,dx \\
%&= -2 \int_{\{ u>k \}} \xi (1+2 \mu u^2) (u-k)_+ \nabla u \cdot \nabla \xi \,dx \\
&\le 2 \int_{\{ u>k \}} \xi (1+2\mu u^2) (u-k)_+ | \nabla u| |\nabla \xi| \,dx \\
&\le 2 \left( \int_{\{ u>k \}} \xi^2 (1+2\mu u^2) |\nabla u|^2 \,dx \right)^{\frac{1}{2}}
\left( \int_{\{ u>k \}} (1+2\mu u^2)(u-k)_+^2 |\nabla \xi|^2 \,dx \right)^{\frac{1}{2}} \\
&\le \frac{1}{2} \int_{\{ u>k \}} \xi^2(1+2\mu u^2)|\nabla u |^2dx
+2 \int_{\{ u>k \}} (1+2\mu u^2)(u-k)_+^2 |\nabla \xi|^2 \,dx \\
&= \frac{1}{2} \int_{\R^2} \xi^2(1+2\mu u^2)|\nabla (u-k)_+|^2dx
+2 \int_{\{ u>k \}} (1+2\mu u^2)(u-k)_+^2 |\nabla \xi|^2 \,dx.
\end{align*}
Thus it follows that
\begin{equation}\label{eq:3.5}
\begin{split}
& \frac{1}{2} \int_{\R^2} \xi^2 (1+2 \mu u^2)|\nabla (u-k)_+|^2 \,dx
+2 \mu \int_{\R^2} \xi^2 (u-k)_+^2 |\nabla u|^2 \,dx  \\
&\le 2 \int_{ \{ u>k \}} (1+2 \mu u^2)(u-k)_+^2 |\nabla \xi|^2 \,dx
+ \lambda \int_{ \{ u>k \}} \xi^2 |u|^{p+1} \,dx.
\end{split}
\end{equation}
Now we put $v:= \sqrt{1+2\mu u^2}(u-k)_++k$ so that
$(1+2 \mu u^2)(u-k)_+^2 =(v-k)_+^2$. 
Then it follows that
\begin{equation} \label{eq:3.6}
u>k \ \Leftrightarrow \ v>k \quad
\hbox{and} \quad u \le v \ \ \hbox{a.e.} \ x\in \R^2.
\end{equation}
Moreover by the Young inequality, one has
\begin{align*}
| \nabla (v-k)_+|^2
&= (1+2 \mu u^2)| \nabla (u-k)_+|^2 
+4 \mu u (u-k)_+ \nabla u \cdot \nabla (u-k)_+
+ \frac{4\mu^2 u^2}{1+2 \mu u^2} (u-k)_+^2 |\nabla u|^2 \\
&\le C \Big( (1+2\mu u^2)|\nabla (u-k)_+|^2 
+(u-k)_+^2 |\nabla u |^2 \Big).
\end{align*}
Thus from \ef{eq:3.5} and \ef{eq:3.6}, we get
\begin{equation} \label{eq:3.7}
\int_{\R^2} \xi^2 |\nabla (v-k)_+|^2 \,dx
\le C \left\{\int_{ \{ v>k \} } (v-k)_+^2 |\nabla \xi|^2 \,dx
+ \int_{ \{ v>k \}} \xi^2 |u|^{p+1} \,dx \right\}.
\end{equation} 
Next by the H\"older inequality, one has
\begin{align*}
\int_{ \{ v>k \}} \xi^2 |u|^{p+1} \,dx
&\le \left( \int_{ \{ v>k \} } \xi^2 \,dx \right)^{\frac{1}{2}}
\left( \int_{ \{ v>k \}} \xi^2 |u|^{2p+2} \,dx \right)^{\frac{1}{2}} \\
&\le \left( \int_{ B_R(y)} |u|^{2p+2} \,dx \right)^{\frac{1}{2}} |A_{k,R}^+|^{\frac{1}{2}},
\end{align*}
where $A_{k,R}^+:= \{ x\in B_R(y) : \ v(x)>k \}$.
From \ef{eq:3.7}, we find that
\[
\int_{B_{\sigma R}(y)} | \nabla (v-k)_+|^2 \,dx
\le C \left\{
\frac{1}{(1-\sigma)^2R^2} \int_{B_R(y)} (v-k)_+^2 \,dx
+\| u\|_{L^{2p+2}(B_R(y))}^{p+1} | A_{k,R}^+|^{\frac{1}{2}} \right\}, \]
for any $\sigma \in (0,1)$ and $k \ge 0$.
This implies that $v$ belongs to the De Giorgi class $DG^+$ and hence,
by %\cite[Lemma 5.4]{LU} and 
\cite{DT}, we have
\begin{equation*} %\label{eq:3.8}
\sup_{ x \in B_{\sigma R}(y)} v_+(x)
\le C \left( \| v\|_{L^2(B_R(y))} + \| u\|_{L^{2p+2}(B_R(y))}^{(p+1)/2} \right),
\end{equation*}
and so $v_+\in L^\infty(\RD)$. Since $u \le v$, we deduce that $u_+\in L^\infty (\RD)$, too.
Arguing similarly, one can show that $u_-$ is bounded from above.
This yields that $u \in L^{\infty}(\R^2)$. 
Finally since $u\in \Hr$, by the radial lemma due to \cite{strauss}, 
$u$ decays to zero at infinity.\\
%\if0
%\todo[inline]{If I'm not in wrong, in \eqref{eq:3.8}, we have $\| v\|_{L^2(B_R(y))}$, instead of $\| u\|_{L^2(B_R(y))}$ and this is a problem if one want to prove that $u\to 0 $ at infinity, because $\| v\|_{L^2(B_R(y))}\nrightarrow 0$ as $|y|\to +\infty$ (while $\| u\|_{L^2(B_R(y))}\to 0$), but in our case, since we are in the radial case, using the Strauss uniform decay, any function in $\Hr$ decays to zero. Actually, to be honest, I do not understand the proof of this result in \cite{LWW}. What do you think?}
%\fi 
{\bf Step 2}: We claim that $u \in C^2(\R^2)$ and
decays exponentially up to second derivatives.\\
By Step 1, we know that $u \in H^1 (\R^2) \cap L^{\infty}(\R^2)$.
Although we only have $V_1$, $V_2 \in L^{\infty}(\R^2)$ at this stage,
we find that $u \in C^{1,\alpha}(\R^2)$ for some $\alpha \in (0,1)$
by applying the regularity result due to \cite{Tol}.
Then by (\ref{lem31ii}) of Lemma \ref{lem:3.1} and the Schauder estimate, 
we conclude that $u \in C^{2,\alpha}(\R^2)$. 
Finally the exponential decay follows by applying suitable
comparison argument
(see e.g. \cite[Theorem 4.1]{RaS}).
This completes the proof.
\end{proof}

Arguing as in \cite{byeon}, standard computations show that
\begin{lemma} \label{lem:3.3}
The functional $I$ in \ef{funct}
is well-defined and continuous in $\X$. Moreover, if the Gateaux derivative of $I$ evaluated in $u\in \X$ is zero in every direction $\varphi \in  C_{0,r}^{\infty}(\R^2)$, then $u$  is a weak solution of \ef{eq:1.1}.
\end{lemma}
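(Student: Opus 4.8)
The plan is to establish, in order, that $I$ is finite on $\X$, that $I$ is continuous for the distance $d_{\X}$, and that the Gateaux derivative of $I$ at $u\in\X$ in a direction $\varphi\in C_{0,r}^\infty(\RD)$ is given by the formula displayed just before the lemma; the last assertion then follows at once, because that formula is exactly the left-hand side of \ef{eq:1.5}.

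For well-definedness I would use the pointwise identity $u^2|\n u|^2=\frac14|\n(u^2)|^2$, so that $\mu\ird u^2|\n u|^2\,dx=\frac\mu4\|\n(u^2)\|_2^2<\infty$ whenever $u\in\X$; the terms $\frac12\ird|\n u|^2\,dx$ and $\frac\omega2\ird u^2\,dx$ are finite because $u\in\Hr$, and $\ird|u|^{p+1}\,dx$ is finite because $H^1(\RD)\hookrightarrow L^r(\RD)$ for every $r\in[2,\infty)$. For the two nonlocal terms, Lemma~\ref{lem:3.1}(\ref{lem31i}) gives $\sup_x h_u^2(|x|)/|x|^2<\infty$, whence $\frac q2\ird\frac{u^2(x)}{|x|^2}h_u^2(|x|)\,dx\le C\|u\|_2^2$ and $\frac{q\mu}4\ird\frac{u^4(x)}{|x|^2}h_u^2(|x|)\,dx\le C\|u\|_4^4$.

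For continuity, let $u_n\to u$ in $\X$, so $\|u_n-u\|_{H^1}\to0$ and $\|\n(u_n^2)-\n(u^2)\|_2\to0$. The first convergence gives $\n u_n\to\n u$ in $L^2$ and $u_n\to u$ in $L^r(\RD)$ for every $r\in[2,\infty)$ (hence $u_n^2\to u^2$ and $u_n^4\to u^4$ in $L^1(\RD)$, and $\ird|u_n|^{p+1}\,dx\to\ird|u|^{p+1}\,dx$), while the second gives $\|\n(u_n^2)\|_2\to\|\n(u^2)\|_2$, so that $\ird u_n^2|\n u_n|^2\,dx=\frac14\|\n(u_n^2)\|_2^2$ converges as well. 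For the nonlocal terms one uses
\[
\Big|\frac{h_{u_n}(|x|)}{|x|}-\frac{h_u(|x|)}{|x|}\Big|
=\frac1{2\pi|x|}\Big|\int_{B_{|x|}(0)}(u_n^2-u^2)\,dy\Big|
\le\frac1{2\sqrt\pi}\,\|u_n^2-u^2\|_2 ,
\]
which together with \ef{eq:3.2} shows $h_{u_n}^2/|x|^2\to h_u^2/|x|^2$ in $L^\infty(\RD)$; combining this with $u_n^2\to u^2$ and $u_n^4\to u^4$ in $L^1(\RD)$ yields convergence of the remaining two integrals, so $I(u_n)\to I(u)$.

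Finally, for the Gateaux derivative, fix $u\in\X$ and $\varphi\in C_{0,r}^\infty(\RD)$; then $u+t\varphi\in\X$ for every $t\in\R$, and since $h_{u+t\varphi}(s)=h_u(s)+2t\int_0^s ru\varphi\,dr+t^2\int_0^s r\varphi^2\,dr$, the function $t\mapsto I(u+t\varphi)$ is a polynomial in $t$ plus the term $-\frac{\lambda}{p+1}\ird|u+t\varphi|^{p+1}\,dx$. The coefficients of the polynomial are finite integrals, checked exactly as in the well-definedness step, using that $\varphi$ has compact support, Lemma~\ref{lem:3.1}, the Sobolev embeddings, and, for the singular weight, that $|x|^{-1}\in L^q_{\mathrm{loc}}(\RD)$ for $q<2$ together with $h_u(s)\le Cs\|u\|_4^2$; the $|u|^{p+1}$ term is differentiated under the integral by dominated convergence (a dominating function being $C(1+|u|^p)$ on $\supp\varphi$ and zero outside, which lies in $L^1(\RD)$). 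Differentiating each term at $t=0$ and applying Fubini's theorem to rewrite the contributions coming from varying the inner integral $h_{u+t\varphi}$ as integrals against $\int_{|x|}^\infty\frac{h_u(s)}{s}(\,\cdot\,)\,ds$, one recovers precisely the expression for $I'(u)[\varphi]$ stated before the lemma, which is the left-hand side of \ef{eq:1.5}; thus its vanishing for all $\varphi\in C_{0,r}^\infty(\RD)$ means exactly that $u$ is a weak solution of \ef{eq:1.1}. The only step requiring real care is this last one: justifying the Fubini exchange for the double integrals carrying the singular weight $1/|x|^2$ near the origin, and verifying that after the exchange the terms reassemble into exactly the factor $q\,h_u^2(|x|)/|x|^2\,(1+\mu u^2)u\varphi$ and the $V_2$-type integral of \ef{eq:1.5}; all the bounds needed are supplied by Lemma~\ref{lem:3.1} and \ef{eq:3.2}, and the remaining computations follow the pattern of \cite{byeon}.
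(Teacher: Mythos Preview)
Your proposal is correct and follows the same route as the paper, which simply records the lemma as a consequence of ``standard computations'' in the style of \cite{byeon}; the only step the paper singles out (in a suppressed proof) is the Fubini exchange turning the term coming from differentiating $h_{u+t\varphi}$ into the $V_2$-type integral, exactly the point you flag as requiring care. Your treatment just makes the well-definedness and continuity arguments explicit where the paper defers to \cite{byeon}.
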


\if0
{\color{red}\begin{proof}
By \cite{byeon}, we see that $I$ is well-defined and continuous on $\X$.
Moreover it is standard to show that the Gateaux derivative is given by
\begin{equation}\label{eq:3.9}
\begin{split}
I'(u)[\varphi] 
&= \int_{\R^2} (1+2 \mu u^2) \nabla u \cdot \nabla \varphi 
+2 \mu u | \nabla u |^2 \varphi 
+\omega u \varphi
-\lambda |u|^{p-1} u \varphi 
+q \frac{h^2_u(|x|)}{|x|^2} (1+\mu u^2)u \varphi \,dx \\
&\quad +q \int_{\R^2} \frac{h_u(|x|)}{|x|^2} \big( 2+ \mu u^2(x) \big) u^2(x)
\left( \int_0^{|x|} s u(s) \varphi (s) \,ds \right) \,dx, \ \varphi \in C_{0,r}^{\infty}(\R^2).
\end{split}
\end{equation}
Exchanging the order of integration, one finds that
\begin{align*}
&\int_{\R^2} \frac{h_u(|x|)}{|x|^2} \big( 2+ \mu u^2(x) \big) u^2(x)
\left( \int_0^{|x|} s u(s) \varphi (s) \,ds \right) \,dx \\
&= 2 \pi \int_0^{\infty} \frac{h_u(r)}{r^2} \big( 2+ \mu u^2(r) \big) u^2(r)
\left( \int_0^{r} s u(s) \varphi (s) \,ds \right) r\,dr \\
&= 2\pi \int_0^{\infty} \left( \int_s^{\infty} 
\frac{h_u(r)}{r} \big( 2+\mu u^2(r) \big) u^2(r) \,dr \right) u(s) \varphi(s) s \,ds \\
&= \int_{\R^2} \left( \int_{|x|}^{\infty} 
\frac{h_u(r)}{r^2} \big( 2+ \mu u^2(r) \big) u^2(r) \,dr \right)
u \varphi \,dx.
\end{align*}
This implies that if $I'(u)[\varphi] =0$, 
then $u$ satisfies \ef{eq:1.5}, that is, 
$u$ is a weak solution of \ef{eq:1.1}.
\end{proof}}
\fi

We conclude this section with the following
\begin{lemma} \label{lem:3.4}
Any weak solution $u$ of \ef{eq:1.1} satisfies the Nehari identity $N(u)=0$ and
the Pohozaev identity $P(u)=0$, where 
\begin{align}
N(u)&=\int_{\R^2} \left\{ (1+4 \mu u^2) |\nabla u|^2 + \omega u^2 - \lambda |u|^{p+1}
+q \frac{h^2_u(|x|)}{|x|^2} (3+ 2\mu u^2) u^2 \right\} \,dx, \label{eq:3.10} \\
P(u) &= \int_{\R^2} \left\{ \omega u^2 -\frac{2 \lambda}{p+1} |u|^{p+1} 
+q \frac{h^2_u(|x|)}{|x|^2} (2 + \mu u^2) u^2 \right\} \,dx. \label{eq:3.11}
\end{align}
\end{lemma}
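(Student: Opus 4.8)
The plan is to derive the two identities by testing the equation against two special functions whose admissibility is guaranteed by the regularity and decay of Proposition~\ref{prop:3.2}: $\varphi=u$ for the Nehari identity, $\varphi=x\cdot\nabla u$ for the Pohozaev one. For the \textbf{Nehari identity}, even though $u\notin C_{0,r}^{\infty}(\RD)$, a density argument — approximating $u$ by $\zeta_R u$ with a radial cut-off $\zeta_R$, as in Step~1 of the proof of Proposition~\ref{prop:3.2}, and using $u\in H^1(\RD)\cap L^{\infty}(\RD)$ with $u^2\in H^1(\RD)$ — lets one take $\varphi=u$ in \eqref{eq:1.5}. This gives
\[
\int_{\RD}\Big\{(1+4\mu u^2)|\nabla u|^2+\omega u^2-\lambda|u|^{p+1}+q\tfrac{h_u^2(|x|)}{|x|^2}(1+\mu u^2)u^2\Big\}\,dx+q\int_{\RD}V_2(x)u^2\,dx=0 .
\]
Passing to polar coordinates and exchanging the order of integration (using $\int_0^s ru^2(r)\,dr=h_u(s)$) one finds $\int_{\RD}V_2(x)u^2\,dx=\int_{\RD}\tfrac{h_u^2(|x|)}{|x|^2}(2+\mu u^2)u^2\,dx$, and substituting this back yields $N(u)=0$.

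For the \textbf{Pohozaev identity}, Proposition~\ref{prop:3.2} ensures $u\in C^2(\RD)$ solves $L(u)=0$ pointwise and decays exponentially together with its derivatives, so one may multiply the equation by $x\cdot\nabla u$ and integrate over $\RD$ — rigorously, multiply by $\zeta_R(x\cdot\nabla u)$ and send $R\to\infty$, all remainder and boundary contributions vanishing by the decay. Writing the principal part as $-\Delta u-\mu u\Delta u^2=-\operatorname{div}\big((1+2\mu u^2)\nabla u\big)+2\mu u|\nabla u|^2$ and integrating by parts, the fact that $N=2$ kills the customary $\tfrac{2-N}{2}\int(1+2\mu u^2)|\nabla u|^2$ term, and the leftover $2\mu\int_{\RD}u|\nabla u|^2(x\cdot\nabla u)$ exactly cancels the contribution of the lower-order quasilinear term $2\mu u|\nabla u|^2$. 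The terms $\omega u$ and $\lambda|u|^{p-1}u$ contribute, again because $N=2$, $\omega\int_{\RD}u^2$ and $-\tfrac{2\lambda}{p+1}\int_{\RD}|u|^{p+1}$.

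It then remains to handle the nonlocal part $-q\int_{\RD}\big(V_1(1+\mu u^2)+V_2\big)u\,(x\cdot\nabla u)\,dx$. In polar coordinates $u\,(x\cdot\nabla u)=\tfrac12 r(u^2)'$, and I would integrate by parts in $r$, using $h_u'(r)=ru^2(r)$ and $V_2'(r)=-\tfrac{h_u(r)}{r}\big(2+\mu u^2(r)\big)u^2(r)$; the boundary terms vanish at $r=+\infty$ by the exponential decay and at $r=0$ since $h_u(0)=0$ and $V_1,V_2$ are bounded (Lemma~\ref{lem:3.1}). A direct computation then shows that the single-$h_u$ terms produced respectively by the $V_1$-integral and the $V_2$-integral are opposite and cancel, so that what is left is exactly $q\int_{\RD}V_2u^2\,dx=q\int_{\RD}\tfrac{h_u^2(|x|)}{|x|^2}(2+\mu u^2)u^2\,dx$ by the order-exchange of the Nehari step; collecting all terms gives $P(u)=0$. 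The routine part is the classical Pohozaev computation of the local terms; the delicate points — which I expect to be the main obstacle — are keeping track of all the constants in the nonlocal integration by parts and justifying those integrations by parts near the origin (this is where the $C^1$-regularity of $V_1,V_2$ and $h_u(|x|)=O(|x|^2)$ from Lemma~\ref{lem:3.1} enter) and at infinity (where the exponential decay from Proposition~\ref{prop:3.2} is essential).
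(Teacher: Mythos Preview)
Your proposal is correct and follows essentially the same route as the paper: test with $\varphi=u$ for Nehari (via density) and multiply by $x\cdot\nabla u$ for Pohozaev (using the $C^2$ regularity and decay of Proposition~\ref{prop:3.2}); the paper integrates over $B_R$ and lets $R\to\infty$ rather than using a cut-off $\zeta_R$, and for the nonlocal terms it organizes the computation via a dilation trick $\left.\frac{d}{dt}\right|_{t=1}\int \frac{u^\alpha(tx)}{|x|^2}h_u^2(t\,\cdot)\,dx$ instead of your direct integration by parts in $r$, but these are equivalent bookkeepings of the same cancellation. One cautionary remark: be careful with the overall sign of the nonlocal contribution---the term $q\int\big(V_1(1+\mu u^2)+V_2\big)u\,(x\cdot\nabla u)\,dx$ appears with a \emph{plus} sign in the tested equation and evaluates to $-q\int V_2 u^2\,dx$, which is what makes $P(u)$ come out with the correct sign.
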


\begin{proof}
First by a density argument, one can use $u \in \X$ as a test function in \eqref{eq:1.5}.
Then we see that the identity $N(u)=0$ holds.\\
Next let $u\in \X\cap C^2(\R^2)$ be a solution of \eqref{eq:1.1}. 
Then multiplying by $\n u \cdot x$ and integrating by parts on $B_R$, 
arguing as in \cite[Proposition 2.3]{byeon}, we have
\begin{align*}
&\int_{B_R}\Delta u(\n u \cdot x)\ dx
=\frac R2 \int_{\de B_R}|\n u|^2\ d\s 
=o_R(1),
\\
&\int_{B_R}u\Delta u^2(\n u \cdot x)\ dx
=\frac 12 \int_{B_R}\Delta u^2(\n u^2 \cdot x)\ dx
=\frac R4 \int_{\de B_R}|\n u^2|^2\ d\s 
=o_R(1),
\\
&\int_{B_R} u(\n u \cdot x)\ dx
=-\int_{B_R}u^2\ dx 
+o_R(1),
\\
&\int_{B_R}|u|^{p-1} u(\n u \cdot x)\ dx
=-\frac 2{p+1} \int_{ B_R}|u|^{p+1}\ dx+o_R(1).
\end{align*}
Moreover, for $\a =2$ or $\a =4$, we have
\begin{align*}
&\frac 4\a \int_{B_R}\left(\int_{|x|}^\infty \frac{h_u(s)}s u^\a (s)ds\right)u (\n u \cdot x)\ dx
+\int_{B_R}\frac{h_u^2(|x|)}{|x|^2} u^{\a -1}  (\n u \cdot x)\ dx
\\
&\qquad=\int_{B_R}\frac{h_u^2(|x|)}{|x|^2} u^{\a -1} (\n u \cdot x)\ dx
+\frac 4 \a \int_{B_R}\frac{u^\a(x)}{|x|^2}\left(\int_0^{|x|}su^2(s)\ ds\right)\left(\int_0^{|x|}s^2u(s)u'(s)\ ds\right)dx
\\
&\qquad \qquad-\frac 4 \a \int_{B_R}\frac{u^\a(x)}{|x|^2}\left(\int_0^{|x|}su^2(s)\ ds\right)\left(\int_0^{|x|}s^2u(s)u'(s)\ ds\right)dx
\\
&\qquad \qquad+\frac 4\a \int_{B_R}\left(\int_{|x|}^\infty \frac{h_u(s)}s u^\a (s)ds\right)u (\n u \cdot x)\ dx
\\
&\qquad =\frac 1 \a \left.\frac{d}{dt}\right|_{t=1}\int_{B_R}\frac{u^\a(tx)}{|x|^2}\left(\int_0^{|x|}su^2(ts)\ ds\right)^2 dx
\\
&\qquad \qquad-\frac 4 \a \int_{B_R}\frac{u^\a(x)}{|x|^2}\left(\int_0^{|x|}su^2(s)\ ds\right)\left(\int_0^{|x|}s^2u(s)u'(s)\ ds\right)dx
\\
&\qquad \qquad+\frac 4\a \int_{B_R}\left(\int_{|x|}^\infty \frac{h_u(s)}s u^\a (s)ds\right)u (\n u \cdot x)\ dx
\\
&\qquad =-\frac 4 \a \int_{B_R}\frac{u^\a(x)}{|x|^2}\left(\int_0^{|x|}su^2(s)\ ds\right)^2 dx
+o_R(1).
\end{align*}
The fact that all the remaining terms $o_R(1)$ go to zero as $R\to +\infty$ can be proved as in \cite{byeon} since $u\in \X$ and so $u^2\in \H$.
Thus from \ef{eq:1.1}, one has
\[
\int_{B_R} \left\{ \omega u^2 -\frac{2 \lambda}{p+1} |u|^{p+1} 
+q \frac{h^2_u(|x|)}{|x|^2} (2 + \mu u^2) u^2 \right\} \,dx
+o_R(1)=0,\]
from which we deduce that $P(u)=0$.
\end{proof}

\section{Proof of Theorem \ref{main}}\label{se:proof1}
Throughout this section, we suppose that $p>5$.
In the following, for any $u \in \X$, we denote 
\begin{align*}
A(u)&=\ird |\n u|^2 \,dx,
\qquad
B(u)=\ird u^2 \,dx,
\qquad
C(u)=\ird u^2 |\n u|^2 \,dx,
\\
D(u)&=\ird \frac{u^2(x)}{|x|^2}\left( \int_{0}^{|x|}su^2(s)\ ds\right)^2 \,dx ,
\\
E(u)&=  \ird \frac{u^4(x)}{|x|^2}\left( \int_{0}^{|x|}su^2(s)\ ds\right)^2 \,dx,
\qquad
F(u)=\ird  |u|^{p+1} \,dx. 
\end{align*}

First we recall the following properties of $D(u)$.
\begin{lemma}[{\cite[Lemma 3.2]{byeon}}]\label{le:compD}
Suppose that a sequence $\{u_n\}$ converges weakly to a function $u$ in $\Hr$ as $n \to +\infty$. Then for each $\varphi \in \Hr$, $\{D(u_n)\}$, $\{D'(u_n)[\varphi]\}$ and $\{D'(u_n)[u_n]\}$ converge up to a subsequence to $D(u)$, $D'(u)[\varphi]$ and $D'(u)[u]$, respectively, as $n \to +\infty$.
\end{lemma}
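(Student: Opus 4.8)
The plan is to follow the weak-continuity argument for the Chern-Simons nonlocal term exactly as in \cite[Lemma 3.2]{byeon}, adapting it to keep track of the weak-lower-semicontinuity of $D$ and the continuity of the derivative along the specified directions. First I would record the elementary estimate that underlies everything: by \eqref{eq:3.2}, for $u\in\Hr$ one has $h_u(s)\le Cs\|u\|_4^2$ and, more importantly, $h_u(s)=\frac1{2\pi}\int_{B_s(0)}u^2\,dy$ is an increasing function of $s$ that is controlled in terms of the $L^4$-norm; since $\Hr\hookrightarrow L^4(\RD)$ is compact (Strauss), from $u_n\weakto u$ in $\Hr$ we get $u_n\to u$ in $L^4(\RD)$ and, after passing to a subsequence, $u_n\to u$ a.e. and pointwise in the Strauss sense away from the origin. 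This in particular forces $h_{u_n}(s)\to h_u(s)$ for every fixed $s>0$, with a uniform bound $h_{u_n}(s)\le Cs$.

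Next I would prove $D(u_n)\to D(u)$. Writing $D(u)=\frac1{2\pi}\ird \frac{h_u^2(|x|)}{|x|^2}u^2(x)\,dx$ (up to the harmless $2\pi$ factor coming from the radial change of variables), I would split the integral into $\{|x|\le R\}$ and $\{|x|>R\}$. On the outer region one uses the Strauss decay $|u_n(x)|\le C|x|^{-1/2}\|u_n\|_{H^1}$ together with $h_{u_n}(|x|)\le C|x|\|u_n\|_4^2$ to bound $\frac{h_{u_n}^2}{|x|^2}u_n^2\le C|x|^{-1}\|u_n\|_4^4\|u_n\|_{H^1}^2\cdot|x|^{-1}$, which is integrable at infinity uniformly in $n$, giving smallness as $R\to\infty$; on $\{|x|\le R\}$ the integrand is dominated by $C|u_n(x)|^2$ which converges in $L^1$, and the pointwise convergence of $h_{u_n}$ plus dominated convergence finishes the job. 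The same splitting handles $D'(u_n)[\varphi]$ and $D'(u_n)[u_n]$: writing $D'(u)[\varphi]=\frac{1}{\pi}\ird\frac{h_u(|x|)}{|x|^2}u(x)\varphi(x)\,h_u(|x|)\,dx+\frac1\pi\ird\Big(\int_{|x|}^\infty\frac{h_u(s)}{s}u^2(s)\,ds\Big)u(x)\varphi(x)\,dx$ after integration by parts (the identity already used in the Gateaux-derivative computation), each factor $h_{u_n}$, the inner integral $\int_{|x|}^\infty\frac{h_{u_n}(s)}{s}u_n^2(s)\,ds$, and $u_n\varphi$ (resp. $u_n^2$) converges pointwise with a uniform majorant built from the $L^4$, $L^2$ and Strauss bounds, so dominated convergence applies on $\{|x|\le R\}$ and the tail is uniformly small by the same decay estimates. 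The case $\varphi=u_n$ is identical since $\|u_n\|_{H^1}$ is bounded.

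The main obstacle — and the only place requiring care — is the lack of uniform integrability near the origin: the weights $1/|x|^2$ are singular at $0$, and one has to be sure that the compensating factor $h_{u_n}^2(|x|)=O(|x|^4)$ (which follows from $h_{u_n}(|x|)\le C|x|^2\|u_n\|_\infty$ once boundedness is known, or more robustly from $h_{u_n}(|x|)=\frac1{2\pi}\int_{B_{|x|}}u_n^2\le \frac12|x|^2\|u_n^2\|_\infty$ — but at the level of $\Hr$ only the weaker $h_{u_n}(|x|)\le C|x|\|u_n\|_4^2$ is available) genuinely absorbs the singularity. In fact $\frac{h_{u_n}^2(|x|)}{|x|^2}\le C\|u_n\|_4^4$ is bounded, which is exactly the estimate in Lemma \ref{lem:3.1}\eqref{lem31i}, so on $\{|x|\le R\}$ the integrand in $D(u_n)$ is dominated by $C\|u_n\|_4^4\,u_n^2\le C\,u_n^2$ and no singularity survives; the derivative terms are handled the same way using $\frac{h_{u_n}(|x|)}{|x|^2}\le C|x|^{-1}\|u_n\|_4^2$ together with a Hardy-type bound $\ird\frac{u_n^2}{|x|^2}\big(\int_0^{|x|}su_n^2\big)\le C\|u_n\|_4^6$ to control the cross terms. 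Having disposed of the origin, the statement reduces to a routine dominated-convergence argument, and since every bound is uniform in $n$ the subsequence extraction is standard; I would therefore only sketch the tail estimate and the dominated convergence, citing \cite[Lemma 3.2]{byeon} for the details that are unchanged.
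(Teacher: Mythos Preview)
The paper does not prove this lemma; it is quoted from \cite[Lemma 3.2]{byeon} without argument. The only in-paper indication of the method is the proof of the companion Lemma \ref{le:compE} for the term $E$, which reproduces the strategy of \cite{byeon}, and that strategy differs from yours.

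Your tail estimate is wrong as written. From $h_{u_n}(|x|)\le C|x|\|u_n\|_4^2$ you only get $\frac{h_{u_n}^2(|x|)}{|x|^2}\le C\|u_n\|_4^4$ (a bounded quantity, not a decaying one), and combining with the Strauss decay $u_n^2(x)\le C|x|^{-1}$ gives a majorant of order $|x|^{-1}$; even the $|x|^{-2}$ you seem to claim is not integrable over $\{|x|>R\}\subset\R^2$, since $\int_R^\infty r^{-2}\cdot r\,dr$ diverges. The fix is to use the trivial bound $h_{u_n}(|x|)\le\frac{1}{2\pi}\|u_n\|_2^2$ for large $|x|$, which yields $\frac{h_{u_n}^2(|x|)}{|x|^2}u_n^2(x)\le C|x|^{-2}u_n^2(x)$ and hence $\int_{|x|>R}(\cdots)\,dx\le CR^{-2}\|u_n\|_2^2\to 0$; with this correction your splitting-plus-dominated-convergence scheme does go through.

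The route taken in \cite{byeon}, visible in the paper's proof of Lemma \ref{le:compE}, avoids the radial splitting entirely. One shows directly that the auxiliary function $x\mapsto\frac{1}{|x|}\int_{B_{|x|}}u_n^2(y)\,dy=\frac{2\pi h_{u_n}(|x|)}{|x|}$ converges to $\frac{2\pi h_u(|x|)}{|x|}$ in $L^q(\R^2)$ for every $q>2$, and then bounds $|D(u_n)-D(u)|$ globally by H\"older: one term with $\|u_n^2-u^2\|_2$ times an $L^4$-norm of the auxiliary function, and another with $\|u\|_4^2$ times the $L^2$-norm of the difference of its squares. This packages the origin and tail issues into a single $L^q$-convergence statement, handles the derivative terms and the functional $E$ by the same mechanism, and spares the ad hoc bounds your approach needs at each stage.
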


Next we show that analogous properties hold for $E(u)$.
\begin{lemma}\label{le:compE}
Suppose that a sequence $\{u_n\}$ converges weakly to a function $u$ in $\Hr$ as $n \to +\infty$. Then for each $\varphi \in \Hr$, $\{E(u_n)\}$, $\{E'(u_n)[\varphi]\}$ and $\{E'(u_n)[u_n]\}$ converge up to a subsequence to $E(u)$, $E'(u)[\varphi]$ and $E'(u)[u]$, respectively, as $n \to +\infty$.
\end{lemma}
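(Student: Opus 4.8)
The plan is to mimic the proof of Lemma \ref{le:compD} (that is, \cite[Lemma 3.2]{byeon}), adapting the argument to the quartic weight $u^4(x)$ appearing in $E$ in place of the quadratic one in $D$. The key point is that weak convergence $u_n \weakto u$ in $\Hr$ is, by the compactness of the radial Sobolev embedding, strong convergence in $L^q(\RD)$ for every $q \in (2,\infty)$, and moreover the radial lemma of Strauss gives a uniform pointwise bound $|u_n(x)| \le C |x|^{-1/2}$ for $|x| \ge 1$. These two facts are exactly what make the nonlocal weights behave well.

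First I would write $E(u) = \ird g_u(x)\, u^4(x)\, dx$ with $g_u(x) := |x|^{-2}\big(\int_0^{|x|} s u^2(s)\,ds\big)^2 = h_u^2(|x|)/|x|^2$, and recall from \ef{eq:3.2} that $0 \le h_u(s) \le C s \|u\|_4^2$, so $0 \le g_u(x) \le C \|u\|_4^4$ uniformly. Since $\{u_n\}$ is bounded in $\Hr$, the weights $g_{u_n}$ are uniformly bounded. Next I would show $g_{u_n}(x) \to g_u(x)$ for a.e.\ $x$ (indeed locally uniformly): for fixed $x$, $h_{u_n}(|x|) = \tfrac{1}{2\pi}\int_{B_{|x|}(0)} u_n^2 \to \tfrac{1}{2\pi}\int_{B_{|x|}(0)} u^2 = h_u(|x|)$ because $u_n \to u$ strongly in $L^2_{loc}$. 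Combined with $u_n^4 \to u^4$ strongly in $L^1(\RD)$ (from strong $L^4$-convergence) and the uniform bound on $g_{u_n}$, a standard dominated/Vitali convergence argument yields $E(u_n) \to E(u)$.

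For the derivative terms I would compute, as in the excerpt's commented-out proof for $I'$,
\[
E'(u)[\varphi] = 4\ird g_u(x)\, u^3(x)\,\varphi(x)\,dx
+ 4\ird \frac{u^4(x)}{|x|^2}\Big(\int_0^{|x|} s u^2(s)\,ds\Big)\Big(\int_0^{|x|} s u(s)\varphi(s)\,ds\Big)\,dx,
\]
and similarly $E'(u)[u]$ (replace $\varphi$ by $u$, which collapses the second integral into another copy of $E$, giving $E'(u)[u] = 8 E(u)$; this identity is itself a useful sanity check and makes $E'(u_n)[u_n] = 8E(u_n) \to 8E(u) = E'(u)[u]$ immediate once $E(u_n)\to E(u)$ is known). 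For $E'(u_n)[\varphi]$ the first integral converges because $u_n^3 \to u^3$ in $L^{4/3}$ while $g_{u_n}\varphi$ is bounded in $L^4$ and converges a.e.; the second is handled by the same exchange-of-order-of-integration trick used for $D$ in \cite{byeon}, rewriting it as $\ird \big(\int_{|x|}^\infty \tfrac{h_{u_n}(s)}{s} u_n^4(s)\,ds\big) u_n\varphi\,dx$ and using that the inner integral is uniformly bounded (by the $V_2$-type estimate in Lemma \ref{lem:3.1}) and converges a.e., together with $u_n\varphi \to u\varphi$ in $L^1$.

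The main obstacle is controlling the contribution near the origin, where $|x|^{-2}$ is singular: one must verify that $h_{u_n}(|x|)^2/|x|^2$ is genuinely dominated independently of $n$ on a neighborhood of $0$ — this is precisely what \ef{eq:3.2} delivers, since it gives $h_{u_n}(|x|)/|x| \le C\|u_n\|_4^2$, removing the singularity. A secondary technical point is the tail $|x| \to \infty$ in the $E'(u_n)[\varphi]$ second term and in $E(u_n)$, where one invokes the Strauss decay $|u_n(x)|\le C|x|^{-1/2}$ to get an integrable dominating function (note $u_n^6 \in L^1$ near infinity with room to spare), so that the a.e.\ limits pass under the integral by dominated convergence. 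Apart from these two localizations, every step is a routine adaptation of the cited lemma for $D$.
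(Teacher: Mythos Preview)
Your argument is correct and follows the same overall strategy as the paper: reduce everything to the compact embedding $H^1_r\hookrightarrow L^q$ for $q>2$, the bound $h_u(s)\le Cs\|u\|_4^2$ from \ef{eq:3.2}, and the identity $E'(v)[v]=8E(v)$ for the last claim. The one technical difference is that the paper's proof invokes from \cite[Lemma~3.2]{byeon} the \emph{strong} $L^q$-convergence ($q>2$) of $x\mapsto |x|^{-1}\int_{B_{|x|}}u_n^2$ and then estimates $|E(u_n)-E(u)|$ by H\"older in $L^2\times L^4\times L^4$ and $L^8\times L^8\times L^2$, whereas you use only the pointwise convergence of $g_{u_n}$ together with its uniform $L^\infty$ bound and then pass to the limit by dominated convergence. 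Your route is marginally more elementary, since it does not need the full $L^q$ convergence of the weight; conversely, the paper's norm estimates avoid any a.e.\ argument. One small point to tighten: the claim ``$u_n\varphi\to u\varphi$ in $L^1(\R^2)$'' is true but not immediate from the compact embedding (which only gives $L^q$, $q>2$); you need a short Vitali step, using e.g.\ $\int_A|u_n-u||\varphi|\le C|A|^{1/3}$ for equi-integrability and $\int_{|x|>R}|u_n-u||\varphi|\le C\|\varphi\|_{L^2(|x|>R)}$ for tightness.
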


\begin{proof}
First we prove that $E(u_n)\to E(u)$ as $n \to +\infty$.
Now one has
\begin{align*}
(2\pi)^2|E(u_n)-E(u)|
&\le \ird |u_n^4(x)-u^4(x)|  \left(\frac{1}{|x|}\int_{B_{|x|}} u_n^2(y) \,dy \right)^2 \,dx \\
&\quad +\ird u^4(x) \left|\left(\frac{1}{|x|}\int_{B_{|x|}} u_n^2(y) \,dy \right)^2 
- \left(\frac{1}{|x|}\int_{B_{|x|}} u^2(y) \,dy \right)^2\right| \,dx \\
&\le \|u_n^4-u^4\|_2
\left\| \frac1{|\cdot|}\int_{B_{|\cdot|}} u_n^2(y) \,dy \right\|_4 ^2\\
&\quad +\|u\|_8^4
\left\| \left(\frac1{|\cdot|}\int_{B_{|\cdot|}} u_n^2(y) \,dy \right)^2
- \left(\frac1{|\cdot|}\int_{B_{|\cdot|}} u^2(y) \,dy \right)^2\right\|_2.
\end{align*}
Since $\Hr$ is compactly embedded into $L^q(\RD)$ for all $q>2$,
it follows that $u_n^4 \to u^4$ in $L^2(\RD)$. Moreover as shown in \cite[Lemma 3.2]{byeon},
we also have
\begin{equation*} %\label{eq:4.1}
\frac1{|x|}\int_{B_{|x|}} u_n^2(y) \,dy 
\to \frac1{|x|}\int_{B_{|x|}} u^2(y) \,dy 
 \ \ \hbox{in} \ \ L^q(\RD) \ \ \hbox{for} \ \ q>2, \ \ \hbox{as} \ \ n \to \infty,
\end{equation*}
from which we conclude that $E(u_n)\to E(u)$, as $n \to +\infty$.
Analogously one can show that $E'(u_n)[\varphi] \to E'(u)[\varphi]$ for any $\varphi \in H^1_r(\R^2)$
and $E'(u_n)[u_n] \to E'(u)[u]$,
as $n \to \infty$.
\if0
{\color{red}
Next we claim that $E'(u_n)[\varphi] \to E'(u)[\varphi]$ 
as $n \to \infty$ for any $\varphi \in H^1_r(\R^2)$.\\
By a direct calculation, one has
\begin{align*}
&(2\pi)^2 \Big| E'(u_n)[\varphi]-E'(u)[\varphi] \Big| \\
&\le 4 \ird |u_n^3(x)-u^3(x)| |\varphi(x)| 
\left( \frac{1}{|x|}\int_{B_{|x|}} u_n^2(y) \,dy \right)^2 \,dx \\
&\quad +4 \ird |u(x)|^3 |\varphi(x)| \left| 
\left( \frac{1}{|x|}\int_{B_{|x|}} u_n^2(y) \,dy \right)^2
-\left( \frac{1}{|x|}\int_{B_{|x|}} u^2(y) \,dy \right)^2 \right| \,dx \\
&\quad +4 \ird |u_n^4(x)-u^4(x)| 
\left( \frac{1}{|x|}\int_{B_{|x|}} u_n^2(y) \,dy \right)
\left( \frac{1}{|x|}\int_{B_{|x|}} |u_n(y)||\varphi(y)| \,dy \right) \,dx \\
&\quad +4 \ird |u(x)|^4 
\left( \frac{1}{|x|}\int_{B_{|x|}} |u_n(y)||\varphi(y)| \,dy \right)
\left( \frac{1}{|x|}\int_{B_{|x|}} |u_n^2(y)-u^2(y)| \,dy \right) \,dx \\
&\quad +4 \ird |u(x)|^4 
\left( \frac{1}{|x|}\int_{B_{|x|}} u^2(y) \,dy \right)
\left( \frac{1}{|x|}\int_{B_{|x|}} |u_n(y)-u(y)||\varphi(y)| \,dy \right) \,dx.
\end{align*}
By the H\"older inequality, we get
\begin{align*}
(2\pi)^2 \Big| E'(u_n)[\varphi]-E'(u)[\varphi] \Big| 
&\le 4 \| u_n^3 -u^3 \|_4 \| \varphi \|_4
\left\| \frac1{|\cdot|}\int_{B_{|\cdot|}} u_n^2(y) \,dy \right\|_4^2\\
&\quad + 4 \| u\|_{12}^3 \| \varphi \|_4
\left\| \left( \frac1{|\cdot|}\int_{B_{|\cdot|}} u_n^2(y) \,dy \right)^2
-\left( \frac1{|\cdot|}\int_{B_{|\cdot|}} u^2(y) \,dy \right)^2 \right\|_2 \\
&\quad + 4 \| u_n^4 - u^4 \|_3 
\left\| \frac1{|\cdot|}\int_{B_{|\cdot|}} u_n^2(y) \,dy \right\|_3
\left\| \frac1{|\cdot|}\int_{B_{|\cdot|}} |u_n(y)||\varphi(y)| \,dy \right\|_3 \\
&\quad + 4 \| u\|_{12}^4 
\left\| \frac1{|\cdot|}\int_{B_{|\cdot|}} |u_n(y)||\varphi(y)| \,dy \right\|_3
\left\| \frac1{|\cdot|}\int_{B_{|\cdot|}} |u_n^2(y)-u^2(y)| \,dy \right\|_3 \\
&\quad + 4 \| u\|_{12}^4 
\left\| \frac1{|\cdot|}\int_{B_{|\cdot|}} u^2(y) \,dy \right\|_3
\left\| \frac1{|\cdot|}\int_{B_{|\cdot|}} |u_n(y)-u(y)||\varphi(y)| \,dy \right\|_3 \\
&=: I_1+I_2+I_3+I_4+I_5.
\end{align*}
By the compact embedding $H^1_r \hookrightarrow L^q(\R^2)$ for $q>2$
and from \ef{eq:4.1}, it follows that $I_1 \to 0$ and $I_2 \to 0$
as $n \to \infty$.
Next arguing as in \cite{byeon}, we have
\begin{align*}
\left\| \frac{1}{|x|} \int_{B_{|x|}} |u_n(y)| |\varphi(y)| \,dy \right\|_3 
&= \left\| \ird \chi_{B_{|x|}}(y) \frac{|u_n(y)||\varphi(y)|}{|x|} \,dy \right\|_3 \\
&\le \ird |u_n(y)| |\varphi(y)| \left\| \chi_{|\cdot|>|y|}(\cdot) \frac{1}{|\cdot|} \right\|_3 \,dy \\
&\le C \ird \frac{|u_n(y)| |\varphi(y)|}{|y|^{\frac{1}{3}}}  \,dy \\
&= C \left[\int_{|y| \le 1} \frac{|u_n(y)| |\varphi(y)|}{|y|^{\frac{1}{3}}}  \,dy
+ \int_{|y| \ge 1} \frac{|u_n(y)| |\varphi(y)|}{|y|^{\frac{1}{3}}}  \,dy\right] \\
&\le C \left[\| u_n\|_{L^4(B_1)} \| \varphi\|_{L^4(B_1)} 
\left\| \frac{1}{| \cdot |} \right\|_{L^\frac{2}{3}(B_1)}^{\frac{1}{3}} \right.\\
&\quad \left.+ \| u_n \|_{L^{\frac{16}{7}}(\R^2 \setminus B_1)}
\| \varphi \|_{L^{\frac{16}{7}}(\R^2 \setminus B_1)}
\left\| \frac{1}{|\cdot|} \right\|_{L^\frac{8}{3}(\R^2 \setminus B_1)}^{\frac{1}{3}} \right]\\
&\le C \Big( \| u_n \|_4 + \| u_n \|_{\frac{16}{7}} \Big) \le C,
\end{align*}
from which we deduce that $I_3 \to 0$. Moreover by \cite{byeon} $I_4 \to 0$, as $n \to \infty$.
In a similar way, one obtains
\[
\left\| \frac1{|\cdot|}\int_{B_{|\cdot|}} |u_n(y)-u(y)||\varphi(y)| \,dy \right\|_3
\le C \Big( \| u_n -u \|_4 + \| u_n-u\|_{\frac{16}{7}} \Big)
\to 0 \ \hbox{as} \ n \to \infty.\]
This implies that $I_5 \to 0$ and hence $E'(u_n)[\varphi] \to E'(u)[\varphi]$, as claimed.\\
Finally by the fact that $E'(u_n)[u_n] = 8E(u_n)$, 
the convergence of $E'(u_n)[u_n]$ follows from that of
$E(u_n)$. This completes the proof.} \fi
\end{proof}

For any $u\in \X$ and $\alpha>0$, we hereafter consider the map
\[
t\in \R^+\longmapsto u_t \in \X,\qquad u_t(x) = t^\a u(t x).
\]
By direct calculations we have $D(u_t)
%&
%{\color{red}=t^{6\a}\ird \frac{u^2(tx)}{|x|^2}\left( \int_{0}^{|x|}su^2(t s) \,ds \right)^2 \,dx 
%=t^{6\a-2}\ird \frac{u^2(tx)}{|t x|^2}\left( \int_{0}^{|t x|}r u^2(r) \,dr \right)^2 \,dx} \\
=t^{6\a-4}D(u)$ and $E(u_t)= t^{8\a-4}E(u)$. 
Thus we get 
\[
I(u_t)=\frac{t^{2\a}}{2} A(u)
+\frac{t^{2\a-2}}{2} \omega B(u)
+t^{4\a} \mu C(u)
+\frac{t^{6\a-4}}{2} q D(u) 
+\frac{t^{8\a-4}}{4} q \mu E(u)
-\frac{t^{(p+1)\a-2}}{p+1} \lambda F(u).
\]
Let
\begin{equation}\label{alpha}
\a \in 
\begin{cases}
(1,+\infty)& \hbox{ if  } p\ge 7,
\\
\left(1,\frac 2{7-p}\right)& \hbox{ if } 5<p< 7.
\end{cases}
\end{equation}
We observe that, fixed $u\in\X\setminus\{0\}$, using \eqref{alpha}, 
the dominant term near $t \sim 0$ of $I(u_t)$ is $t^{2\a -2}$,
which implies that $I(u_t)$ is strictly positive for small $t>0$
and any $u \in \X \setminus \{0\}$.
Furthermore the dominant term near $t \sim +\infty$ among all positive
terms of $I(u_t)$ is $t^{8\alpha-4}$.
Thus under the assumption \eqref{alpha}, 
we see that $8 \alpha -4 < (p+1)\alpha-2$ and hence
$I(u_t) \to -\infty$ as $t \to +\infty$ for any $u \in \X \setminus \{0\}$.
These facts imply that the map
$$\g _u:=t\in (0,+\infty)\longmapsto I(u_t)$$
has a maximum point at a positive level. 
The next lemma shows that this maximum point is the unique critical point.

\begin{lemma}\label{le:max}
Let $u\in \X \setminus\{0\}$. Then the map $\g _u$ 
attains its maximum at exactly one point $t(u)>0$. 
Moreover $\g _u$ is positive and increasing on $(0,t(u))$,
and decreasing for $t>t(u)$.
\end{lemma}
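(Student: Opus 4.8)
The plan is to exploit the explicit form of $\gamma_u$. For fixed $u\in\X\setminus\{0\}$ write
\[
\gamma_u(t)=I(u_t)=\sum_{i=1}^{5}c_i\,t^{p_i}-c_6\,t^{p_6},
\]
where $c_1=\tfrac12A(u)$, $c_2=\tfrac\omega2B(u)$, $c_3=\mu C(u)$, $c_4=\tfrac q2D(u)$, $c_5=\tfrac{q\mu}{4}E(u)$, $c_6=\tfrac{\lambda}{p+1}F(u)$ are all strictly positive (since $u\neq0$), and $p_1=2\alpha$, $p_2=2\alpha-2$, $p_3=4\alpha$, $p_4=6\alpha-4$, $p_5=8\alpha-4$, $p_6=(p+1)\alpha-2$. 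Thus $\gamma_u\in C^\infty(0,+\infty)$. Two elementary facts about the exponents, both consequences of \eqref{alpha}, will be used repeatedly: every $p_i$ is positive (here $\alpha>1$ enters through $p_2>0$), and $p_i<p_6$ for $i=1,\dots,5$ — for $i=5$ with $5<p<7$ this is precisely the inequality $(7-p)\alpha<2$ built into \eqref{alpha}, while the remaining cases follow at once from $p>5$ and $\alpha>1$. Recall also from the discussion preceding the statement that $\gamma_u(0^+)=0$, $\gamma_u>0$ for small $t>0$, and $\gamma_u(t)\to-\infty$ as $t\to+\infty$; in particular $\gamma_u'(t)>0$ for small $t>0$ and $\gamma_u'(t)<0$ for large $t$.

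The core of the argument is the claim that every critical point of $\gamma_u$ is non-degenerate and a strict local maximum. If $t_0>0$ satisfies $\gamma_u'(t_0)=0$, then multiplying by $t_0$ gives $\sum_{i=1}^{5}p_ic_i\,t_0^{p_i}=p_6c_6\,t_0^{p_6}$; inserting this into
\[
t_0^{2}\,\gamma_u''(t_0)=\sum_{i=1}^{5}p_i(p_i-1)c_i\,t_0^{p_i}-p_6(p_6-1)c_6\,t_0^{p_6}
\]
and using $p_6c_6t_0^{p_6}=\sum_{i=1}^5 p_ic_it_0^{p_i}$ yields $t_0^{2}\gamma_u''(t_0)=\sum_{i=1}^{5}p_i(p_i-p_6)c_i\,t_0^{p_i}$, which is strictly negative because each factor $p_i(p_i-p_6)$ is negative. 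Hence $\gamma_u''(t_0)<0$.

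It then suffices to set $t(u):=\inf\{t>0:\gamma_u'(t)\le0\}$; the sign information on $\gamma_u'$ forces $t(u)\in(0,+\infty)$, continuity gives $\gamma_u'(t(u))=0$, and $\gamma_u'>0$ on $(0,t(u))$, so $\gamma_u$ is strictly increasing there — hence, together with $\gamma_u(0^+)=0$, strictly positive on $(0,t(u)]$. Finally $\gamma_u'<0$ on $(t(u),+\infty)$: since $\gamma_u''(t(u))<0$, $\gamma_u'$ is negative just to the right of $t(u)$, and if it vanished again we could take the first such zero $r>t(u)$, which would then be a local minimum of $\gamma_u$ with $\gamma_u''(r)\ge0$, contradicting the previous paragraph. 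Thus $\gamma_u$ increases strictly on $(0,t(u))$ and decreases strictly on $(t(u),+\infty)$, so $t(u)$ is the unique maximum point (in particular the unique critical point of $\gamma_u$), and all assertions follow.

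I expect the only genuinely delicate point to be the uniqueness: because the exponents $p_i$ need not be integers, $\gamma_u'$ is not a polynomial and one cannot simply count its zeros, so the non-degeneracy computation of the second paragraph is what makes the argument work. Everything else — the exponent inequalities and the differentiation — is routine bookkeeping.
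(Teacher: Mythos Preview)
Your proof is correct and takes a genuinely different route from the paper's. The paper factors the derivative as $\gamma_u'(t)=t^{8\alpha-5}g(t)$, where
\[
g(t)=\frac{\alpha A(u)}{t^{6\alpha-4}}+\frac{(\alpha-1)\omega B(u)}{t^{6\alpha-2}}+\frac{4\alpha\mu C(u)}{t^{4\alpha-4}}+\frac{(3\alpha-2)qD(u)}{t^{2\alpha}}+(2\alpha-1)q\mu E(u)-\frac{(p+1)\alpha-2}{p+1}\lambda F(u)\,t^{(p-7)\alpha+2};
\]
under \eqref{alpha} each nonconstant term of $g$ is strictly decreasing, so $g$ is strictly decreasing and $\gamma_u'$ has exactly one zero, whose sign pattern on either side is immediate. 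Your approach instead proves that every critical point $t_0$ satisfies $\gamma_u''(t_0)<0$ via the identity $t_0^{2}\gamma_u''(t_0)=\sum_{i=1}^{5}p_i(p_i-p_6)c_i t_0^{p_i}$, and then rules out a second zero of $\gamma_u'$ by a first-zero/sign argument. The paper's factorization is shorter and yields monotonicity on both sides in one stroke; your second-derivative device is slightly longer but is a robust trick that works for any finite linear combination of powers whenever the single negative term carries the strictly largest exponent and all exponents are positive --- exactly the two facts you isolate.

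One small wording point: at the hypothetical second zero $r>t(u)$ you cannot assert that $r$ is a local minimum of $\gamma_u$, since nothing is known about $\gamma_u'$ to the right of $r$. What you do have --- and what suffices for the contradiction --- is $\gamma_u''(r)\ge 0$, which follows directly from $\gamma_u'(t)<0$ on $(t(u),r)$ together with $\gamma_u'(r)=0$. With that adjustment the argument is complete.
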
 

\begin{proof}
Let $u\in \X \setminus\{0\}$.
A simple computation yields that
\begin{align*}
\g _u'(t)&=\a A(u)t^{2\a-1}
+(\a -1) \omega B(u)t^{2\a-3}
+4\a \mu C(u)t^{4\a-1}
+(3\a -2) q D(u)t^{6\a-5}
\\
&\quad 
+(2\a -1)q \mu  E(u)t^{8\a-5}
-\frac{(p+1)\a-2}{p+1} \lambda F(u)t^{(p+1)\a-3}
\\
&=t^{8\a-5}\left(\frac{\a A(u)}{t^{6\a-4}}
+\frac{(\a -1) \omega B(u)}{t^{6\a-2}}
+\frac{4\a \mu C(u)}{t^{4\a-4}}
+\frac{(3\a -2)q D(u)}{t^{2\a}} 
 \right.
\\
&\qquad \qquad \left.
+(2\a -1) q \mu E(u)
-\frac{(p+1)\a-2}{p+1} \lambda F(u)t^{(p-7)\a+2}\right)
\\
&=:t^{8\a-5}g(t).
\end{align*}
From \eqref{alpha}, it is clear that $\g _u'(t)>0$ for small $t >0$ and $\g _u'(t)<0$ for large $t >0$. 
Then, there exists $t_0 > 0$ such that $\g _u'(t_0) = 0$. 
Moreover, from the choice of $\a $, the function $g(t)$ is strictly decreasing for all $t>0$.
Thus since $\{t > 0 : \g _u'(t) = 0\} = \{t > 0 : g(t) = 0\}$, the critical point of $\g _u(t)$ is unique.
\end{proof}
Let us define 
\begin{align*}
\Gamma(u):=
\g _u'(1)
& =\a A(u) 
+(\a -1)\omega B(u) 
+4\a \mu C(u) 
+(3\a -2) q D(u) \\
&\quad +(2\a -1) q \mu E(u)
-\frac{(p+1)\a-2}{p+1} \lambda F(u)
\end{align*}
and
\begin{equation*}
\M:=\{u \in \X\setminus\{0\} : \Gamma(u)=0 \}.
\end{equation*}
\begin{remark}\label{rem}
From \ef{eq:3.10} and \ef{eq:3.11}, we readily see that
$\Gamma(u)=\alpha N(u)-P(u)$ and hence by Lemma \ref{lem:3.4}, 
any weak solution of \ef{eq:1.1} belongs to $\M$.
\end{remark}

To complete the proof of Theorem \ref{main}, we prepare several lemmas. 
The first one is a direct consequence of Lemma \ref{le:max} since 
\begin{equation}\label{eq:jtf}
\Gamma(u_t)=t\g _u'(t).
\end{equation}

\begin{lemma}\label{le:M}
For any $u\in \X \setminus\{0\}$ there exists a unique $t(u)>0$ such that $u_{t(u)}\in \M$.
\end{lemma}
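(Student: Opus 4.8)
The plan is to obtain the statement as an immediate consequence of Lemma \ref{le:max} and the scaling identity \eqref{eq:jtf}. Fix $u\in\X\setminus\{0\}$. By Lemma \ref{le:max}, the function $\g _u$ attains its maximum at exactly one point $t(u)>0$ and is increasing on $(0,t(u))$, decreasing on $(t(u),+\infty)$; consequently $\g _u'(t)>0$ for $0<t<t(u)$, $\g _u'(t(u))=0$, and $\g _u'(t)<0$ for $t>t(u)$. In particular $t(u)$ is the only zero of $\g _u'$ on $(0,+\infty)$.

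Now I would combine this with \eqref{eq:jtf}, namely $\Gamma(u_t)=t\,\g _u'(t)$ for all $t>0$. For $t>0$ we then have $u_t\in\M$, i.e.\ $\Gamma(u_t)=0$, if and only if $t\,\g _u'(t)=0$, which, since $t>0$, is equivalent to $\g _u'(t)=0$, hence to $t=t(u)$ by the previous paragraph. This yields both existence and uniqueness of the scaling parameter. Finally, since $u\neq 0$ implies $u_{t(u)}\neq 0$, we get $u_{t(u)}\in\X\setminus\{0\}$, so $u_{t(u)}\in\M$, as claimed.

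There is no genuine difficulty here: the nontrivial analytic work has already been carried out in Lemma \ref{le:max}, where the choice of $\a $ in \eqref{alpha} forces the auxiliary function $g$ to be strictly decreasing, which is precisely what makes the critical point of $\g _u$ unique. The only point worth checking carefully is the identity \eqref{eq:jtf}, but this is routine: differentiating the explicit expression for $I(u_t)$ term by term in $t$ one reads off $\g _u'(t)$, and comparing $t\,\g _u'(t)$ with the definition of $\Gamma$ confirms $\Gamma(u_t)=t\,\g _u'(t)$.
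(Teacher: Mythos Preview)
Your proof is correct and follows exactly the same approach as the paper: the lemma is deduced as an immediate consequence of Lemma~\ref{le:max} together with the scaling identity \eqref{eq:jtf}. The paper states this in a single line, while you spell out the equivalence $\Gamma(u_t)=0\Leftrightarrow \gamma_u'(t)=0\Leftrightarrow t=t(u)$ explicitly, but the argument is the same.
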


Next we establish, in the next lemmas, that the functional $I$ is strictly positive on $\M$. 
Indeed, as a first step we have
\begin{lemma}\label{le:pos}
There exists $c>0$ such that for any $u\in \M$
$$
I(u)\ge c\ird [|\n u|^2+u^2+ u^2 |\n u|^2] \,dx.
$$
\end{lemma}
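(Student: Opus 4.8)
The plan is purely algebraic: use the defining constraint $\Gamma(u)=0$ to remove the (negative) term $-\frac{\lambda}{p+1}F(u)$ from the energy, and then observe that the resulting expression for $I(u)$ has only positive coefficients. First I would rewrite $\Gamma(u)=0$ as
\[
\frac{\lambda}{p+1}F(u)=\frac{1}{(p+1)\alpha-2}\Big[\alpha A(u)+(\alpha-1)\omega B(u)+4\alpha\mu C(u)+(3\alpha-2)qD(u)+(2\alpha-1)q\mu E(u)\Big],
\]
which is legitimate since $(p+1)\alpha-2>0$ (because $\alpha>1$ and $p>5$). Substituting this into \eqref{funct} and setting $K:=(p+1)\alpha-2$ gives
\begin{align*}
I(u)&=\Big(\tfrac12-\tfrac{\alpha}{K}\Big)A(u)+\Big(\tfrac12-\tfrac{\alpha-1}{K}\Big)\omega B(u)+\Big(1-\tfrac{4\alpha}{K}\Big)\mu C(u)\\
&\quad+\Big(\tfrac12-\tfrac{3\alpha-2}{K}\Big)qD(u)+\Big(\tfrac14-\tfrac{2\alpha-1}{K}\Big)q\mu E(u).
\end{align*}

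Next I would check that each of the five coefficients is strictly positive, using only $p>5$ and the range \eqref{alpha} of $\alpha$. Indeed $\tfrac12-\tfrac{\alpha}{K}>0\iff(p-1)\alpha>2$, $1-\tfrac{4\alpha}{K}>0\iff(p-3)\alpha>2$, and $\tfrac12-\tfrac{3\alpha-2}{K}>0\iff(p-5)\alpha>-2$, all of which hold because $p>5$ and $\alpha>1$; the coefficient of $\omega B(u)$ dominates that of $A(u)$ and hence is also positive. The only point that genuinely uses \eqref{alpha} is the last coefficient: $\tfrac14-\tfrac{2\alpha-1}{K}>0\iff(p-7)\alpha+2>0$, which is automatic when $p\ge7$ and is precisely the condition $\alpha<\frac{2}{7-p}$ when $5<p<7$.

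Finally, since $D(u)\ge0$ and $E(u)\ge0$ (they are integrals of non-negative functions) and $q,\mu>0$, the last two terms may be dropped, leaving, with $c_1,c_2,c_3>0$ the first three coefficients above,
\[
I(u)\ge c_1 A(u)+c_2\,\omega B(u)+c_3\,\mu C(u)\ge c\big(A(u)+B(u)+C(u)\big),
\]
where $c:=\min\{c_1,\,c_2\omega,\,c_3\mu\}>0$ depends only on $p,\alpha,\omega,\mu$. This is exactly the asserted inequality. There is no analytic obstacle here, since everything follows from the algebraic identity above; the only care required is the bookkeeping that confirms the admissible range of $\alpha$ in \eqref{alpha} is precisely what forces positivity of the $E(u)$-coefficient.
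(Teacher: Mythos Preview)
Your proof is correct and follows essentially the same approach as the paper: both compute $I(u)-\frac{1}{(p+1)\alpha-2}\Gamma(u)$ (equivalently, substitute the constraint into $I$), obtain the same five explicit coefficients, and invoke \eqref{alpha} for their positivity. Your write-up is in fact more detailed than the paper's, which simply states ``by \eqref{alpha}, all coefficients are positive and we conclude.''
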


\begin{proof}
Let $u\in \M$. Then we have
\begin{align*}
I(u)&=I(u)-\frac{1}{(p+1)\a-2}\Gamma(u)
\\
&
=\left(\frac 12 -\frac{\a}{(p+1)\a-2}\right)A(u)
+\left(\frac 12 -\frac{\a-1}{(p+1)\a-2}\right) \omega B(u)
+\left(1 -\frac{4\a}{(p+1)\a-2}\right) \mu C(u) \\
&\quad
+\left(\frac 12 -\frac{3\a-2}{(p+1)\a-2}\right) q D(u)
+\left(\frac 14 -\frac{2\a-1}{(p+1)\a-2}\right) q \mu E(u) \\
&=\frac{(p-1)\a -2}{2\big((p+1)\a -2\big)}A(u)
+\frac{(p-1)\a}{2\big((p+1)\a -2\big)} \omega B(u)
+\frac{(p-3)\a -2}{(p+1)\a -2} \mu C(u) \\
&\quad+\frac{(p-5)\a +2}{2\big((p+1)\a -2\big)} q D(u)
+\frac{(p-7)\a +2}{4\big((p+1)\a -2\big)} q \mu E(u).
\end{align*}
By \eqref{alpha}, all coefficients are positive and we conclude.
\end{proof}

\begin{lemma}\label{le:p+1}
There exist $c_1,c_2>0$ such that for any $u\in \M$
\[
\|u\|_{p+1}^{p+1}
\ge
c_1 \ird [|\n u|^2+u^2+ u^2 |\n u|^2] \,dx
\ge c_2.
\] 
\end{lemma}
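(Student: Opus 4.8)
The plan is to establish the two inequalities separately, both as consequences of the identity $\Gamma(u)=0$ that holds on $\M$, together with the sign conditions encoded in \eqref{alpha}.

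\textbf{Step 1 (the first inequality).} Fix $u\in\M$ and rewrite $\Gamma(u)=0$ as
\[
\frac{(p+1)\a-2}{p+1}\,\lambda F(u)=\a A(u)+(\a-1)\,\omega B(u)+4\a\mu C(u)+(3\a-2)\,qD(u)+(2\a-1)\,q\mu E(u).
\]
By \eqref{alpha} one has $\a>1$, so $\a-1>0$, $3\a-2>0$, $2\a-1>0$ and $(p+1)\a-2>p-1>0$; since $D(u)\ge0$ and $E(u)\ge0$ (both being integrals of nonnegative quantities), discarding the last two terms gives
\[
\frac{(p+1)\a-2}{p+1}\,\lambda F(u)\ge\a A(u)+(\a-1)\,\omega B(u)+4\a\mu C(u)\ge\min\{\a,(\a-1)\omega,4\a\mu\}\,\ird[|\n u|^2+u^2+u^2|\n u|^2]\,dx.
\]
Recalling $F(u)=\|u\|_{p+1}^{p+1}$, this is the first inequality, with the explicit constant $c_1=\frac{(p+1)\min\{\a,(\a-1)\omega,4\a\mu\}}{((p+1)\a-2)\lambda}>0$.

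\textbf{Step 2 (the second inequality).} Set $\tau(u):=\ird[|\n u|^2+u^2+u^2|\n u|^2]\,dx$, so that $\|u\|_{H^1}^2\le\tau(u)$. Since $p>5$ we have $2\le p+1<\infty$, hence the Sobolev embedding $H^1(\RD)\hookrightarrow L^{p+1}(\RD)$ gives $C_S>0$ with $\|u\|_{p+1}^{p+1}\le C_S\|u\|_{H^1}^{p+1}\le C_S\,\tau(u)^{(p+1)/2}$. Combining with Step 1, for every $u\in\M$ we get $c_1\,\tau(u)\le C_S\,\tau(u)^{(p+1)/2}$; since $u\ne0$ forces $\tau(u)\ge\|u\|_{H^1}^2>0$, dividing yields $\tau(u)^{(p-1)/2}\ge c_1/C_S$, i.e. $\tau(u)\ge(c_1/C_S)^{2/(p-1)}>0$. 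Therefore $\|u\|_{p+1}^{p+1}\ge c_1\,\tau(u)\ge c_1(c_1/C_S)^{2/(p-1)}=:c_2>0$, which is the full chain.

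I do not anticipate a genuine obstacle. The delicate point is purely bookkeeping: one must check that \emph{all} of $\a-1$, $3\a-2$, $2\a-1$ and $(p+1)\a-2$ are positive, which is exactly why the restriction \eqref{alpha} on $\a$ (with $p>5$) is imposed; and one should notice that Step 2 only needs the $H^1$-part of $\tau$ to be controlled from below, which is automatic because $\tau\ge\|u\|_{H^1}^2$. If instead one wishes to use the quasilinear term, the Gagliardo--Nirenberg inequality applied to $u^2\in H^1(\RD)$ gives $\|u\|_{p+1}^{p+1}\le C\|u\|_4^4\,C(u)^{(p-3)/4}\le C\,\tau(u)^{(p+5)/4}$ and the same conclusion follows, but the bare Sobolev estimate, whose exponent $(p+1)/2>1$ already suffices, is the shorter route.
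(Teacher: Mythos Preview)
Your proof is correct and follows essentially the same approach as the paper: both use $\Gamma(u)=0$ together with the nonnegativity of $D$ and $E$ to obtain the first inequality, and then combine it with the Sobolev embedding $H^1(\R^2)\hookrightarrow L^{p+1}(\R^2)$ to deduce the uniform lower bound. The only cosmetic difference is that the paper factors out the common constant $(\a-1)$ (using $\a\ge\a-1$ and $4\a\mu\ge(\a-1)\mu$) whereas you keep the sharper $\min\{\a,(\a-1)\omega,4\a\mu\}$, but the argument is the same.
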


\begin{proof}
Since $D(u)$ and $E(u)$ are non-negative it follows that
\[
(\a -1) \ird [|\n u|^2 +\omega u^2+ \mu u^2 |\n u|^2] \,dx
-\frac{(p+1)\a -2}{p+1} \lambda \int_{\RD} |u|^{p+1} \,dx
\le \Gamma(u) =0, 
\]
for all $u \in \M$, and we have the first inequality.
\\
Moreover, by the Sobolev inequality, one gets
\[
\ird [|\n u|^2+u^2+u^2 |\n u|^2] \,dx
\le C_1 \| u\|_{p+1}^{p+1}
\le C_2 \| u\|_{H^1}^{p+1}
\le C_2\left( \ird [|\n u|^2+u^2+u^2|\n u|^2] \,dx \right)^{\frac{p+1}{2}}.
\]
This completes the proof.
\end{proof}

Combining Lemmas \ref{le:pos} and \ref{le:p+1}, we have
\begin{lemma}\label{le:posfinal}
There exists $c>0$ such that $I(u)\ge c$, for any $u\in \M$.
\end{lemma}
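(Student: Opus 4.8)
The plan is simply to chain together the two preceding lemmas. First I would fix an arbitrary $u\in\M$ and invoke Lemma \ref{le:p+1}: the double inequality stated there gives in particular
\[
c_1 \ird [|\n u|^2+u^2+ u^2 |\n u|^2] \,dx \ge c_2,
\]
so that
\[
\ird [|\n u|^2+u^2+ u^2 |\n u|^2] \,dx \ge \frac{c_2}{c_1}>0,
\]
a bound which is uniform over all $u\in\M$.

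Next I would apply Lemma \ref{le:pos}, which furnishes a constant $c>0$, independent of $u$, with $I(u)\ge c\ird [|\n u|^2+u^2+ u^2 |\n u|^2] \,dx$ for every $u\in\M$. Combining this with the previous display yields $I(u)\ge c\,c_2/c_1$ for all $u\in\M$, which is the asserted positive lower bound (with explicit constant $c\,c_2/c_1$ in place of the generic $c$ in the statement).

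I do not expect any genuine obstacle here: the substantive work has already been done upstream — the positivity of all the coefficients in the decomposition of $I(u)-\frac{1}{(p+1)\a-2}\Gamma(u)$ coming from the admissible range \eqref{alpha} of $\a$ (Lemma \ref{le:pos}), and the Sobolev-inequality bootstrap on $\M$ that prevents $\ird [|\n u|^2+u^2+ u^2 |\n u|^2]\,dx$ from degenerating to zero (Lemma \ref{le:p+1}). The one point worth keeping in mind while writing the argument is that this last lower bound really does rely on $u\neq 0$, which is exactly the condition encoded in $\M\subset\X\setminus\{0\}$; otherwise the integral quantity could vanish and no uniform bound on $I$ would follow.
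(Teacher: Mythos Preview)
Your proposal is correct and follows exactly the approach the paper takes: the paper simply states that the lemma follows by ``Combining Lemmas \ref{le:pos} and \ref{le:p+1}'' without giving further detail, and your argument spells out precisely that combination.
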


Let us define
\begin{equation}\label{sigma}
\s :=\inf_{u\in \M}I(u).
\end{equation}
Then by Lemma \ref{le:posfinal}, we infer that $\s >0$.
Moreover by Lemmas \ref{le:max}, \ref{le:M} and from \ef{sigma}, it follows that
\begin{equation} \label{eq:4.3}
\sigma = \inf_{u \in \X \setminus \{0 \} } \max_{t >0} I(u_t).
\end{equation}
Finally we establish the following result.
\begin{proposition} \label{prop:1}
Let $u \in \X$ be a minimizer of $I(u)$ under the constraint $\M$.
Then $u$ is a radial ground state solution of \eqref{eq:1.1}.
Moreover, any radial ground state solution of \ef{eq:1.1} is positive.
\end{proposition}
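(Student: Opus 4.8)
\emph{Overall strategy.} The plan is to argue in three steps: (i) the level $\s$ defined in \eqref{sigma} is attained; (ii) every minimizer of $I$ on $\M$ is a weak solution of \eqref{eq:1.1}; (iii) such a minimizer is a radial ground state and, in fact, is sign-definite, hence positive up to sign. Step (i) is a routine compactness argument exploiting the radial symmetry; step (ii) is the heart of the matter and the main obstacle, because $I$ is only Gateaux differentiable, and only along directions in $C_{0,r}^\infty(\RD)$, so no Lagrange-multiplier rule is available on the constraint $\M$ (which, moreover, sits inside the non-linear set $\X$); step (iii) then follows from Proposition \ref{prop:3.2}, from Remark \ref{rem}, and from the strong maximum principle.

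\emph{Step (i): $\s$ is attained.} Let $\{u_n\}\subset\M$ with $I(u_n)\to\s$. By Lemma \ref{le:pos}, $\{u_n\}$ is bounded in $\Hr$ and $\{u_n^2\}$ is bounded in $\H$, so $\{u_n\}$ is bounded in $\X$; up to a subsequence $u_n\weakto v$ in $\Hr$, $u_n\to v$ in $L^q(\RD)$ for every $q>2$ and a.e., and $u_n^2\weakto v^2$ in $\H$ (the weak limit being identified through the a.e.\ convergence), whence $v\in\X$. By Lemma \ref{le:p+1}, $F(u_n)\ge c_2>0$, and since $u_n\to v$ in $L^{p+1}(\RD)$ we get $v\ne0$. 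The functional $J:=I-((p+1)\a-2)^{-1}\Gamma$ is, by the computation in the proof of Lemma \ref{le:pos}, a linear combination of $A,B,C,D,E$ with strictly positive coefficients and no $F$-term, hence weakly sequentially lower semicontinuous along $\{u_n\}$ (weak lower semicontinuity of $A,B,C$ together with Lemmas \ref{le:compD}--\ref{le:compE} for $D,E$); therefore $J(v)\le\liminf_n J(u_n)=\liminf_n I(u_n)=\s$, and likewise $\Gamma(v)\le\liminf_n\Gamma(u_n)=0$. By \eqref{eq:jtf} and Lemma \ref{le:max}, $\Gamma(v)=\g_v'(1)\le0$ forces the unique $t(v)>0$ with $v_{t(v)}\in\M$ (Lemma \ref{le:M}) to satisfy $t(v)\le1$; and since each of $A(v_t),\dots,E(v_t)$ is the corresponding $v$-quantity multiplied by a positive power of $t$, the map $t\mapsto J(v_t)$ is nondecreasing, so $I(v_{t(v)})=J(v_{t(v)})\le J(v_1)=J(v)\le\s$. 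As $v_{t(v)}\in\M$ also gives $I(v_{t(v)})\ge\s$, we conclude that $v_{t(v)}$ attains $\s$.

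\emph{Step (ii): a minimizer is a weak solution.} Let $u\in\M$ with $I(u)=\s$ and suppose, for contradiction, that $u$ is not a weak solution of \eqref{eq:1.1}; by Lemma \ref{lem:3.3} there is $\varphi_0\in C_{0,r}^\infty(\RD)$ with $I'(u)[\varphi_0]<0$. Since $u+\e\varphi_0\in\X$ for all $\e\in\R$, set $G(\e,t):=I\big((u+\e\varphi_0)_t\big)$; by the formula for $I(u_t)$ recorded just before \eqref{alpha}, $G(\e,t)$ is obtained from that formula by replacing $u$ with $w_\e:=u+\e\varphi_0$, hence is a finite combination of positive powers of $t$ with coefficients $A(w_\e),\dots,F(w_\e)$. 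These coefficients are $C^1$ in $\e$: $A(w_\e),B(w_\e),C(w_\e)$ depend polynomially on $\e$ (all integrals being finite since $u\in\X$ and $\varphi_0\in C_0^\infty$), $F(w_\e)$ is $C^1$, and $D(w_\e),E(w_\e)$ together with their $\e$-derivatives are continuous in $\e$ by Lemmas \ref{le:compD}--\ref{le:compE}; hence $G$ is $C^2$ near $(\e,t)=(0,1)$. Because $u\in\M$, $\partial_t G(0,1)=\g_u'(1)=\Gamma(u)=0$, while $\partial_{tt}G(0,1)=\g_u''(1)<0$: from the factorization $\g_u'(t)=t^{8\a-5}g(t)$ of the proof of Lemma \ref{le:max} one has $g(1)=\Gamma(u)=0$, and $\g_u''(1)=g'(1)<0$ since $g$ is strictly decreasing there and $A(u)>0$. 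By the implicit function theorem there is a $C^1$ map $\e\mapsto\tau(\e)$, $\tau(0)=1$, with $\partial_t G(\e,\tau(\e))=0$, i.e., by \eqref{eq:jtf}, $(u+\e\varphi_0)_{\tau(\e)}\in\M$ for $|\e|$ small. Differentiating and using $\partial_t G(0,1)=0$,
\[
\left.\frac{d}{d\e}\right|_{\e=0} I\big((u+\e\varphi_0)_{\tau(\e)}\big)=\partial_\e G(0,1)+\partial_t G(0,1)\,\tau'(0)=I'(u)[\varphi_0]<0,
\]
so $I\big((u+\e\varphi_0)_{\tau(\e)}\big)<I(u)=\s$ for small $\e>0$, contradicting $(u+\e\varphi_0)_{\tau(\e)}\in\M$ and \eqref{sigma}. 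Hence $u$ is a weak solution of \eqref{eq:1.1}. This step is the crux: the absence of a differentiable structure on $\X$ makes this scaling--fibering deformation necessary in place of a Lagrange multiplier, and its two non-trivial ingredients are the non-degeneracy $\g_u''(1)<0$ of the fibering map and the $C^1$-dependence on $\e$ of the nonlocal terms $D,E$ provided by Lemmas \ref{le:compD}--\ref{le:compE}.

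\emph{Step (iii): ground state and positivity.} By Proposition \ref{prop:3.2}, $u\in C^2(\RD)$ and decays exponentially up to second derivatives. By Remark \ref{rem}, every nontrivial radial weak solution of \eqref{eq:1.1} lies in $\M$, hence has energy $\ge\s=I(u)$; thus $u$ is a radial ground state. Finally, $|u|\in\X$ with $I(|u|)=I(u)$ and $\Gamma(|u|)=\Gamma(u)=0$, so $|u|$ is again a minimizer of $I$ on $\M$, hence (by Step (ii)) a weak solution and, by Proposition \ref{prop:3.2}, of class $C^2(\RD)$. Writing the equation for $w:=|u|\ge0$ and dividing by $1+2\mu w^2>0$ yields $-\Delta w=\psi(x)w$ with $\psi\in L^\infty(\RD)$ (since $w,\n w$ are bounded and $V_1,V_2$ are bounded by Lemma \ref{lem:3.1}); as $w\not\equiv0$, the strong maximum principle forces $w>0$ in $\RD$. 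Therefore $u$ never vanishes and, being continuous, has constant sign, so up to replacing $u$ by $-u$ we obtain a positive radial ground state, which also completes the proof of Theorem \ref{main}. The same reasoning applies to an arbitrary radial ground state $\tilde u$ of \eqref{eq:1.1}: by Remark \ref{rem} it belongs to $\M$ and realizes $\s$, so it is a minimizer, $|\tilde u|$ is positive, and $\tilde u$ is sign-definite.
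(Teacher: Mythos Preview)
Your proof is correct, but the core argument in Step~(ii) follows a genuinely different route from the paper's. A few remarks are in order.

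First, Step~(i) is not part of the proposition (which hypothesizes a minimizer) but is the content of the paper's separate proof of Theorem~\ref{main}. Your compactness argument is a variant of the paper's: rather than using directly $I(\bar u_{\bar t})\le\liminf I([u_n]_{\bar t})\le\liminf I(u_n)$, you work with the $F$-free functional $J=I-((p+1)\alpha-2)^{-1}\Gamma$, observe its weak lower semicontinuity and the monotonicity of $t\mapsto J(v_t)$, and combine these with $\Gamma(v)\le 0\Rightarrow t(v)\le 1$. Both arguments are standard and equivalent here.

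The substantive difference is Step~(ii). The paper argues by a path-deformation in the spirit of a quantitative deformation lemma: assuming $I'(u)[\varphi]<-1$, it constructs a continuous path $\eta(t)=u_t+\varepsilon\xi(t)\varphi$ along which $I$ stays strictly below $\sigma$, and then applies the intermediate value theorem to $t\mapsto\Gamma(\eta(t))$ to locate a point of $\M$ with energy below $\sigma$. Your argument instead applies the implicit function theorem to $G(\e,t)=I\big((u+\e\varphi_0)_t\big)$, using the non-degeneracy $\gamma_u''(1)=g'(1)<0$ (which you correctly extract from the factorization $\gamma_u'(t)=t^{8\alpha-5}g(t)$ together with $A(u)>0$) to obtain a $C^1$ curve $\e\mapsto\tau(\e)$ with $(u+\e\varphi_0)_{\tau(\e)}\in\M$, and then differentiate. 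Your route is cleaner whenever the fibering maximum is non-degenerate and makes transparent that the constraint is natural; the paper's route is more robust in that it only needs continuity of $\Gamma$ and uniqueness of the fibering maximum, not joint $C^1$ regularity of $G$ or the strict sign of $\gamma_u''(1)$. One minor quibble: your invocation of Lemmas~\ref{le:compD}--\ref{le:compE} for the $C^1$ dependence of $D(w_\e),E(w_\e)$ on $\e$ is indirect, since those lemmas concern weakly convergent sequences; the more direct observation is that $D(u+\e\varphi_0)$ and $E(u+\e\varphi_0)$ are polynomials in $\e$, all cross terms being integrable because $\varphi_0$ has compact support.

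Step~(iii) coincides with the paper's, with the strong maximum principle (applied after dividing by $1+2\mu w^2$) playing the role of the Harnack inequality invoked there.
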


\begin{proof}
We argue as in \cite[Lemma 2.5]{LWW} or \cite[Theorem 2.2]{RS}.\\
Let $u \in \M$ be a minimizer of the functional $I|_{\M}$.
Then from \ef{eq:4.3}, one has
\begin{equation} \label{eq:4.4}
I(u)= \inf_{v \in \X \setminus \{0 \}} \max_{t >0} I(v_t)
=\inf_{v\in \M} I(v)= \sigma.
\end{equation}
Suppose by contradiction that $u$ is not a weak solution of \ef{eq:1.1}.
Then one can find $\varphi \in C_{0,r}^{\infty}(\R^2)$ such that
\[
%\begin{split}
I'(u)[\varphi] <-1.
%&=\int_{\R^2} \Bigg\{ (1+2\mu u^2) \nabla u \cdot \nabla \varphi
%+2\mu u | \nabla u|^2 \varphi
%+ \omega u \varphi- \lambda |u|^{p-1}u \varphi  \\
%&\qquad + q \frac{h_u^2(|x|)}{|x|^2} (1+ \mu u^2) u\varphi
%+ q \left( \int_{|x|}^{\infty} \frac{h_u(s)}{s} \big( 2+ \mu u^2(s) \big) u^2(s)\,ds
%\right) u \varphi \Bigg\} \,dx.
%\end{split}
\]
We choose small $\varepsilon>0$ so that
\begin{equation} \label{eq:4.5}
I'(u_t+ \tau \varphi) [\varphi]
\le - \frac{1}{2} \quad \hbox{for} \ |t-1|+|\tau| \le \varepsilon.
\end{equation}
Finally let $\xi\in C_0^{\infty}(\R)$ be a cut-off function satisfying
$0 \le \xi \le 1$, $\xi(t)=1$ for $|t-1| \le \frac{\varepsilon}{2}$
and $\xi(t)=0$ for $|t-1| \ge \varepsilon$.\\
For $t \ge 0$, we construct a path $\eta:\R_+ \to \X$ defined by
\[
\eta(t)=
\begin{cases}
u_t & \hbox{if} \ |t-1| \ge \varepsilon \\
u_t+ \varepsilon \xi(t) \varphi & \hbox{if} \ |t-1| < \varepsilon.
\end{cases}
\]
Then $\eta$ is continuous on the metric space $(\X, d_{\X})$.
Moreover, choosing $\varepsilon$ smaller if necessary, 
it follows that $d_{\X}(\eta(t),0)>0$, for $|t-1| < \varepsilon$.
Next we claim that
\begin{equation} \label{eq:4.6}
\sup_{t \ge 0} I(\eta(t)) < \sigma.
\end{equation}
If $|t-1| \ge \varepsilon$, one has
\[
I(\eta(t)) =I(u_t) <I(u) = \sigma,\]
because the function $t \mapsto I(u_t)$ attains its maximum at $t=1$ for $u \in \M$.\\
If $|t-1| < \varepsilon$, we get
\[
I(u_t+\varepsilon \xi(t) \varphi)
=I(u_t)
+ \int_0^{\varepsilon} I'\big(u_t+\tau \xi(t) \varphi\big) [\xi(t) \varphi] \,d\tau.\]
Then from \ef{eq:4.5}, we obtain
\[
I(\eta(t)) \le I(u_t)-\frac{1}{2} \varepsilon \xi(t) < 
%{\color{red}\cancel{I(u_t) \le}} 
\sigma,\]
yielding that \ef{eq:4.6} holds.\\
Now by \eqref{eq:jtf} and arguing as in Lemma \ref{le:max}, it follows that $\Gamma(\eta(1-\varepsilon))>0$
and $\Gamma(\eta(1+\varepsilon))<0$.
By the continuity of the map $t \mapsto \Gamma(\eta(t))$, 
there exists $t_0 \in (1-\varepsilon,1+\varepsilon)$ such that
$\Gamma(\eta(t_0))=0$.
This implies that $\eta(t_0)=u_{t_0}+\varepsilon \xi(t_0) \varphi \in \M$
and $I(\eta(t_0))< \sigma$ by \ef{eq:4.6}.
This contradicts \ef{eq:4.4},
and hence $u$ is a weak solution of \ef{eq:1.1}. By Remark \ref{rem}, since any weak solution of \ef{eq:1.1} belongs to $\M$, 
we conclude that $u$ is a radial ground state solution.\\
Finally, if $u$ is a minimizer of $I|_{\M}$, then one finds that
$|u|$ is also a minimizer.
Thus we may assume that $u \ge 0$. 
Then, by Proposition \ref{prop:3.2}, we know that $u \in C^2(\R^2)$
and hence we can apply the Harnack inequality \cite{Tr} to conclude that $u>0$.
\end{proof}

\begin{proof}[Proof of Theorem \ref{main}]
Let $\{ u_n \}$ be a minimizing sequence for $I|_\M$, 
namely $\{ u_n \} \subset \M$ and $I(u_n) \to \s $ as $n\to +\infty$. 
By Lemma \ref{le:pos}, the sequences $\{ u_n\}$ and $\{u_n^2 \}$ are bounded in $\Hr$. 
Therefore, there exists $\bar u\in \X$ such that, 
by the compactness result due to \cite{strauss},
up to a subsequence 
\begin{align*}
&u_n\rightharpoonup \bar u  \hbox{ weakly in }\H, \\
&u_n^2\rightharpoonup \bar u^2  \hbox{ weakly in }\H, \\
&u_n\to \bar u\hbox{  in }L^q(\RD) \ \hbox{for any} \ q>2.
\end{align*}
Then, by Lemma \ref{le:p+1}, we infer that $\bar u\neq 0$.\\
Next, by Lemma \ref{le:M}, let us consider $\bar{t}=t(\bar{u})>0$ such that $\bar u_{\bar t}\in \M$. 
Since $[u_n]_{\bar t}\rightharpoonup \bar u_{\bar t}$ and $([u_n]_{\bar t})^2\rightharpoonup \bar u_{\bar t}^2$ weakly in $\H$ as $n \to +\infty$, by Lemmas \ref{le:compD} and \ref{le:compE}, we have
\[
\s \le I(\bar u_{\bar t})
\le \liminf_{n \to \infty} I ([u_n]_{\bar t}).\]
On the other hand, since $\{ u_n \} \subset \M$, 
the function $t \mapsto I ([u_n]_{t})$ reaches its maximum at $t=1$ for all $n \in \N$.
This implies that
\[
\liminf_{n \to \infty} I ([u_n]_{\bar t})
\le \liminf_{n \to \infty} I(u_n)=\s.
\]
Therefore, $\bar u_{\bar t}$ is minimizer of $I$ on $\M$.
Finally by Proposition \ref{prop:1}, 
we conclude that, actually, $\bar u_{\bar t}$ is a radial ground state solution of \eqref{eq:1.1}. 
\end{proof}

\section{Proof of Theorem \ref{thm2}}\label{se:proof2}

In this section, we prove the non-existence result for \ef{eq:1.1} when $1<p<5$.
First we state the following inequality which was obtained in \cite{byeon}.

\begin{proposition}(\cite[Proposition 2.4]{byeon}) \label{prop:5.1} 
For any $u \in H^1_r(\R^2)$, the following inequality holds:
\begin{equation} \label{eq:5.1}
\ird |u|^4 \,dx
\le 2 \| \nabla u\|_2 \left( \ird \frac{h_u^2(|x|)}{|x|^2} u^2 \,dx \right)^{\frac{1}{2}}.
\end{equation}
\end{proposition}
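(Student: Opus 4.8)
The plan is to prove the pointwise-in-radius identity first and then integrate. Writing $u = u(r)$ with $r = |x|$, I would start from the fundamental theorem of calculus applied to $s \mapsto h_u(s) = \int_0^s r u^2(r)\,dr$ combined with an integration by parts. Specifically, recall that $h_u'(s) = s u^2(s)$, so that
\[
\ird |u|^4 \,dx = 2\pi \int_0^\infty u^4(s)\, s\, ds = 2\pi \int_0^\infty u^2(s)\, h_u'(s)\, ds.
\]
Integrating by parts in the last integral (the boundary terms vanish since $h_u(0)=0$ and $h_u(s) u^2(s) \to 0$ as $s \to \infty$, which follows from $u \in H^1_r$ and the Strauss-type decay), one obtains
\[
2\pi \int_0^\infty u^2(s)\, h_u'(s)\, ds = -2\pi \int_0^\infty h_u(s)\, 2 u(s) u'(s)\, ds = -4\pi \int_0^\infty h_u(s)\, u(s) u'(s)\, ds.
\]

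Next I would estimate the right-hand side by Cauchy--Schwarz, carefully inserting the weight $1/s$ to match the target expression. Write $h_u(s) u(s) u'(s) = \bigl(\tfrac{h_u(s)}{s} u(s)\bigr)\bigl(s\, u'(s)\bigr)$, and then
\[
4\pi \int_0^\infty \Bigl|\frac{h_u(s)}{s} u(s)\Bigr| \, |s\, u'(s)|\, ds
\le 4\pi \left( \int_0^\infty \frac{h_u^2(s)}{s^2} u^2(s)\, s\, ds \right)^{1/2}
\left( \int_0^\infty |u'(s)|^2\, s\, ds \right)^{1/2}.
\]
Recognizing that $2\pi \int_0^\infty \tfrac{h_u^2(s)}{s^2} u^2(s)\, s\, ds = \ird \tfrac{h_u^2(|x|)}{|x|^2} u^2\, dx$ and $2\pi \int_0^\infty |u'(s)|^2 s\, ds = \|\nabla u\|_2^2$, the product of the two factors (after pulling out the $2\pi$'s correctly) collapses to exactly $2 \|\nabla u\|_2 \bigl( \ird \tfrac{h_u^2(|x|)}{|x|^2} u^2 \bigr)^{1/2}$, which is \eqref{eq:5.1}.

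The main obstacle I anticipate is purely a bookkeeping one: tracking the constant $2\pi$ through the radial reductions and the integration by parts so that the final constant is precisely $2$ rather than some other multiple, and justifying the vanishing of the boundary term at infinity (for which I would invoke that $u \in H^1_r(\R^2)$ implies $u(s) \to 0$ and $h_u(s) = O(s^2 \|u\|_4^2)$ near zero while $h_u$ stays bounded at infinity, so $h_u(s) u^2(s) \to 0$). A density argument reducing first to smooth compactly supported radial $u$ makes all these manipulations rigorous, and then one passes to general $u \in H^1_r(\R^2)$ by approximation, using that both sides of \eqref{eq:5.1} are continuous with respect to the $H^1_r$-norm (the left side by the compact Sobolev embedding, the right side via the estimate $h_u(s) \le C s \|u\|_4^2$ already recorded in \eqref{eq:3.2}).
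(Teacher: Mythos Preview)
Your argument is correct and is essentially the approach the paper has in mind: the paper does not reprove Proposition~\ref{prop:5.1} (it is quoted from \cite{byeon}), but its proof of the companion inequality in Proposition~\ref{pr:5.2} proceeds by exactly the same mechanism---writing one factor as $-\int_r^\infty (u^k)'(s)\,ds$, swapping the order of integration to produce $h_u$, and applying the Schwarz inequality---which is the Fubini counterpart of your integration by parts. One small presentational remark: the splitting $h_u u u' = \bigl(\tfrac{h_u}{s}u\bigr)(s\,u')$ does not literally yield the displayed bound via plain Cauchy--Schwarz in $L^2(ds)$; the clean way to land on $s\,ds$ in both factors is to split as $\bigl(\tfrac{h_u u}{\sqrt{s}}\bigr)\bigl(\sqrt{s}\,u'\bigr)$, after which your constant computation goes through exactly as written.
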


Next we establish the following inequality, 
which cannot be obtained by \ef{eq:5.1} directly.

\begin{proposition} \label{pr:5.2}
For any $u \in \X$, the following inequality holds:
\begin{equation} \label{eq:5.2}
\ird |u|^6 \,dx 
\le 4 \left( \ird u^2 |\nabla u|^2 \,dx \right)^{\frac{1}{2}}
\left( \ird \frac{h_u^2(|x|)}{|x|^2} u^4 \,dx \right)^{\frac{1}{2}}.
\end{equation}
\end{proposition}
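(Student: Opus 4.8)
The plan is to reduce \eqref{eq:5.2} to a one-dimensional identity in the radial variable and then integrate by parts, exactly as in the proof of \eqref{eq:5.1}. Writing $u=u(r)$ with $r=|x|$, the key algebraic fact is $h_u'(r)=r\,u^2(r)$, so that
\[
\frac{1}{2\pi}\ird |u|^6\,dx=\int_0^\infty r\,u^6(r)\,dr=\int_0^\infty u^4(r)\,h_u'(r)\,dr .
\]
An integration by parts on an arbitrary interval $[\delta,R]\subset(0,\infty)$ gives
\[
\int_\delta^R u^4 h_u'\,dr=u^4(R)h_u(R)-u^4(\delta)h_u(\delta)-4\int_\delta^R u^3 u' h_u\,dr ,
\]
which is legitimate because, for $u\in\X$, the function $u$ is absolutely continuous on every compact subinterval of $(0,\infty)$ with $u'$ locally square integrable, while $h_u$ is absolutely continuous on $[0,\infty)$ (its integrand being in $L^1(0,\infty)$).

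Next I would let $\delta\to 0$ and $R\to+\infty$. The left-hand side converges to $\frac{1}{2\pi}\|u\|_6^6<\infty$, since $\Hr\hookrightarrow L^6(\RD)$. Using \eqref{eq:3.2} one has $h_u(r)^2/r^2\le C\|u\|_4^4$, whence $|u^3u'h_u|=\big(|u|\,|u'|\,r^{1/2}\big)\big(u^2\,(h_u/r)\,r^{1/2}\big)$ is a product of two functions in $L^2((0,\infty),dr)$: the first because $\int_0^\infty u^2u'^2\,r\,dr=\frac{1}{2\pi}\ird u^2|\n u|^2\,dx<\infty$ (as $u^2\in\H$), the second because $\int_0^\infty u^4\,(h_u^2/r^2)\,r\,dr\le C\|u\|_4^4\int_0^\infty u^4\,r\,dr<\infty$; hence $\int_0^\infty|u^3u'h_u|\,dr<\infty$ and the last integral above converges. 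For the boundary terms, at infinity $u(R)\to 0$ by the Strauss radial lemma while $h_u(R)\le\frac{1}{2\pi}\|u\|_2^2$ stays bounded, so $u^4(R)h_u(R)\to 0$; at the origin \eqref{eq:3.2} gives $h_u(\delta)\le C\delta\|u\|_4^2$, and the Cauchy--Schwarz estimate $|u(\delta)-u(1)|\le(\log\frac{1}{\delta})^{1/2}\big(\int_0^\infty u'^2\,r\,dr\big)^{1/2}$ for $0<\delta<1$ yields $u^2(\delta)=O(\log\frac{1}{\delta})$, so that $u^4(\delta)h_u(\delta)=O\big(\delta(\log\frac{1}{\delta})^2\big)\to 0$. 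Therefore
\[
\int_0^\infty r\,u^6\,dr=-4\int_0^\infty u^3u'h_u\,dr\le 4\int_0^\infty\big(|u|\,|u'|\,r^{1/2}\big)\big(u^2\,(h_u/r)\,r^{1/2}\big)\,dr .
\]

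Finally I would apply the Cauchy--Schwarz inequality to the last integral, obtaining
\[
\int_0^\infty r\,u^6\,dr\le 4\Big(\int_0^\infty u^2u'^2\,r\,dr\Big)^{1/2}\Big(\int_0^\infty\frac{h_u^2}{r^2}u^4\,r\,dr\Big)^{1/2},
\]
and, multiplying through by $2\pi$ and using $\ird(\,\cdot\,)\,dx=2\pi\int_0^\infty(\,\cdot\,)\,r\,dr$, this is precisely \eqref{eq:5.2} with constant $4$.

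The only genuinely delicate point is the justification of the integration by parts, i.e.\ that the boundary term at the origin vanishes for a function $u\in\X$ which need not be bounded near $0$; this is exactly what the logarithmic growth bound $u^2(\delta)=O(\log\frac{1}{\delta})$ takes care of. Alternatively, one may first establish \eqref{eq:5.2} for $u\in C_{0,r}^\infty(\RD)$, where all boundary contributions vanish trivially, and then pass to the general case by approximating a given $u\in\X$ (for instance by truncation followed by mollification) so that each of the three integrals in \eqref{eq:5.2} converges.
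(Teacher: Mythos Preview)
Your proof is correct and follows essentially the same approach as the paper: rewrite $r\,u^6=u^4\,h_u'$, integrate by parts in the radial variable, and apply Cauchy--Schwarz. The only cosmetic difference is that the paper first reduces to $u\in C_{0,r}^\infty(\R^2)$ by density and then performs the integration by parts via Fubini (writing $u^4(r)=-\int_r^\infty(u^4)'(s)\,ds$ and swapping the order), which makes the boundary terms vanish trivially; you instead work directly with $u\in\X$ and handle the boundary contributions at $0$ and $\infty$ explicitly, mentioning the density route only as an alternative at the end.
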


\begin{proof}
The proof is same as that of Proposition \ref{prop:5.1}.
By the density, we may assume that $u \in C_0^{\infty}(\R^2)$.
Then by the Fubini Theorem and the Schwarz inequality, one has
\begin{align*}
\ird |u|^6 \,dx 
%&= 2\pi \int_0^{\infty} r |u(r)|^6 \,dr \\
&= 2\pi \int_0^{\infty} ru^2(r) \left( 
\int_r^{\infty} - \big( u^4(s) \big)' \,ds \right) \,dr \\
&\le 8\pi \int_0^{\infty} \int_0^{\infty} ru^2(r)
|u(s)|^3 |u'(s)| \chi_{\{s>r\}} \,ds \,dr \\
&= 8 \pi \int_0^{\infty} |u(s)|^3 |u'(s)|
\left( \int_0^s ru^2(r) \,dr \right) \,ds \\
&= 4 \ird \frac{|u|^3|\nabla u|}{|x|} h_u(|x|) \,dx \\
&\le 4 \left( \ird u^2 |\nabla u|^2 \,dx \right)^{\frac{1}{2}}
\left( \ird \frac{h_u^2(|x|)}{|x|^2} u^4 \,dx \right)^{\frac{1}{2}}.
\end{align*}
\end{proof}

\begin{proof}[Proof of Theorem \ref{thm2}]
Suppose that $1<p<5$ and let $u \in \X$ be a solution of \ef{eq:1.1}.
We distinguish three cases: $0<q<\frac{1}{3}$, $\frac{1}{3} \le q <2$
and $q \ge 2$.\\
First we consider the case $0<q<\frac{1}{3}$.
From \ef{eq:5.1}, \ef{eq:5.2}
and by the Young inequality, it follows that
\begin{align} \label{eq:5.4}
\ird |u|^4 \,dx &\le \| \nabla u\|_2^2 + \ird \frac{h_u^2(|x|)}{|x|^2} u^2 \,dx, \nonumber \\
\ird |u|^6 \,dx &\le 2 \ird u^2 |\nabla u|^2 \,dx
+2 \ird \frac{h_u^2(|x|)}{|x|^2} u^4 \,dx. 
\end{align}
Then since $N(u)=0$, we obtain
\begin{align*}
0 &\ge (1-3q) \| \nabla u\|_2^2 +2 \mu (2-q) \ird u^2 |\nabla u|^2 \,dx 
+\ird \Big( \omega u^2 +3q u^4 +q\mu u^6 -\lambda |u|^{p+1} \Big) \,dx \\
&\ge \ird \Big( \omega u^2 +3q u^4 +q\mu u^6 -\lambda |u|^{p+1} \Big) \,dx.
\end{align*}
In the case $\frac{1}{3} \le q<2$, 
we slightly modify the use of \ef{eq:5.1} to obtain
\[
\ird |u|^4 \,dx \le \frac{1}{3q} \| \nabla u\|_2^2
+3q \ird \frac{h_u^2(|x|)}{|x|^2} u^2 \,dx.\]
From this estimate, the Nehari identity and \ef{eq:5.4}, one gets
\[
0 \ge \left(1- \frac{1}{3q} \right) \| \nabla u \|_2^2
+2\mu(2-q) \ird u^2 |\nabla u|^2 \,dx
+\ird \Big( \omega u^2 +u^4 +q\mu u^6 - \lambda |u|^{p+1} \Big) \,dx.\]
Finally in the case $q \ge 2$, we apply the following estimate which is derived from 
\ef{eq:5.2}:
\[
\ird |u|^6 \,dx \le \frac{4}{q} \ird u^2 |\nabla u|^2 \,dx
+q \ird \frac{h_u^2(|x|)}{|x|^2} u^4 \,dx.\]
Then we have
\[
0 \ge \left(1-\frac{1}{3q} \right) \| \nabla u\|_2^2
+4\mu \left( 1- \frac{2}{q} \right) \ird u^2 |\nabla u|^2 \,dx
+\ird \Big( \omega u^2 +u^4 +2 \mu u^6 - \lambda |u|^{p+1} \Big) \,dx.\]
Now we define $g: \R \to \R$ where
\[
g(t):=
\begin{cases}
\omega t^2 +3q t^4 +q\mu t^6 -\lambda |t|^{p+1} & \hbox{ if } 0<q<\frac{1}{3}, \\
\omega t^2 + t^4 +q\mu t^6 -\lambda |t|^{p+1} & \hbox{ if }  \frac{1}{3}\le q <2 , \\
\omega t^2 + t^4 +2\mu t^6 -\lambda |t|^{p+1} & \hbox{ if }  q \ge 2 . 
\end{cases}
\]
Then one has 
\begin{equation} \label{eq:5.5}
\ird g(u) \,dx\le 0.
\end{equation}
We observe that for given $q$, $\mu$ and $\lambda>0$, 
there exists $\bar{\omega}>0$ such that 
$g(t)>0$ for $\omega \ge \bar{\omega}$ and $t \ne 0$.
Indeed for $0<q < \frac{1}{3}$, one has
\[
g'(t)=t \Big( 2 \omega +12 q t^2 +6q \mu t^4 
-(p+1) \lambda |t|^{p-1} \Big).\]
Since $1<p<5$, we can apply the Young inequality to
\[
(p+1)\lambda |t|^{p-1} = \left( \frac{24 q \mu |t|^4}{p-1} 
\right)^{\frac{p-1}{4}} \cdot (p+1) \lambda \left( \frac{p-1}{24 q \mu} \right)^{\frac{p-1}{4}}\]
and obtain
\[
(p+1) \lambda |t|^{p-1}
\le 6q\mu t^4 + \frac{5-p}{4} \left( (p+1)\lambda \right)^{\frac{4}{5-p}}
\left( \frac{p-1}{24 q \mu} \right)^{\frac{p-1}{5-p}}.\]
Thus it follows that
\begin{equation*} %\label{eq:5.6}
g'(t) \ge t\left( 2 \omega -
\frac{5-p}{4} \left( (p+1)\lambda \right)^{\frac{4}{5-p}}
\left( \frac{p-1}{24 q \mu} \right)^{\frac{p-1}{5-p}}
+12 q t^2 \right) \ \hbox{for} \ t >0.
\end{equation*}
Taking $\omega$ larger, we have $g'(t)>0$ for $t>0$.
Other cases can be treated in a same way.
Similarly one has $g'(t)<0$ for $t<0$ and hence
$g(t)>0$ for $t \ne 0$, as claimed.
This and \ef{eq:5.5} imply that $u \equiv 0$
and hence the proof is complete.
\end{proof}

\begin{remark} \label{rem:5.3}
		It is easy to check that $\bar{\omega}$, defined in Theorem \ref{thm2}, increases as $q$ decreases.
		In other words, if we fix $\omega$, we have to take $q$ smaller 
		in order to obtain nontrivial solutions of \ef{eq:1.1}.
		This is exactly the situation studied in \cite{cunha} for the case $\mu=0$.\\
		Analogously $\bar{\omega}$ increases as $\mu$ decreases.
		In particular, when $1<p<3$, we notice that $\bar{\omega}$ can be chosen 
		independent of $\mu$ and this is consistent with the result obtained in \cite{AD}.
		On the other hand, if $3 \le p<5$, 
		we have to choose larger $\bar{\omega}$ as $\mu$ becomes smaller.
		We expect, therefore, that, for fixed $\omega$ and $3 \le p<5$, 
		we are able to find a nontrivial solution of \ef{eq:1.1}
		provided that $\mu$ is sufficiently large.
\end{remark}

\medskip
\subsection*{Acknowledgment}
The first two authors are partially supported by  a grant of the group GNAMPA of INdAM and FRA2016 of Politecnico di Bari. 
The third author is supported by JSPS Grant-in-Aid for Scientific Research (C) (No. 15K04970).

\end{document}